\def\@seccntformat#1{\csname the#1\endcsname.\hspace{2ex}}
 \renewcommand{\subsection}%
  {\@startsection{subsection}%
  {2}%
  {\z@}%
  {2ex}
  {0ex}
  {\reset@font\normalsize\bfseries}}%
 \newcommand{\nsection}{\@startsection{section}{1}{\z@}%
     {-5ex}
     {1ex}
     {\reset@font\center\large\sc}}
 \renewenvironment{thebibliography}[1]
 {\nsection*{\refname\@mkboth{\refname}{\refname}}%
   \list{\@biblabel{\@arabic\c@enumiv}}%
   {\settowidth
   \labelwidth{\@biblabel{#1}}%
   \leftmargin
	\labelwidth
        \advance
	 \leftmargin
	 \labelsep
         \@openbib@code
         \usecounter{enumiv}%
         \let\p@enumiv\@empty
	 \parskip=0pt
	 \itemsep=1pt
	 \parsep=1pt
	 \itemindent=\z@
         \renewcommand\theenumiv{\@arabic\c@enumiv}}%
   	 \sloppy
   	 \clubpenalty4000
   	 \@clubpenalty\clubpenalty
   	 \widowpenalty4000%
   	 \footnotesize
   	 \sfcode`\.\@m}
  	 {\def\@noitemerr
    	 {\@latex@warning{Empty `thebibliography' environment}}%
   	 \endlist}
\newtheoremstyle{thm}
 {1em}
 {3pt}
 {\itshape}
 {}
 {\bf}
 {. ---}
 {0.5em}
 {}
\newtheoremstyle{dfn}
 {1em}
 {3pt}
 {}
 {}
 {\bf}
 {. {---}}
 {0.5em}
 {}
\theoremstyle{thm}
\newtheorem{thm}[subsection]{Theorem}
\newtheorem{lem}[subsection]{Lemma}
\newtheorem*{lem*}{Lemma}
\newtheorem{cor}[subsection]{Corollary}
\newtheorem*{cor*}{Corollary}
\newtheorem{prop}[subsection]{Proposition}
\newtheorem*{prop*}{Proposition}
\newtheorem*{conj*}{Conjecture}
\newtheorem*{thm*}{Theorem}
\theoremstyle{dfn}
\newtheorem{dfn}[subsection]{Definition}
\newtheorem*{dfn*}{Definition}
\newtheorem*{ex*}{Example}
\newtheorem{rem}[subsection]{Remark}
\newtheorem*{rem*}{Remark}
\newtheorem{prob}[subsection]{Question}
\newcommand{\shom}{\mathop{\mc{H}om}\nolimits}
\newenvironment{meta1}{
\noindent\color{red}
\sffamily[}{\upshape]}
\newenvironment{meta2}{
\noindent\color{magenta}
\sffamily[}{\upshape]}
\newsavebox{\circlebox}
\savebox{\circlebox}{\fontencoding{OMS}\selectfont\char13}
\newlength{\circleboxwdht}
\renewcommand{\H}{\ms{H}}
\newcommand{\cH}{{}^\mr{c}\!\ms{H}}
\newcommand{\dd}[1]{[\![#1]\!]}
\newcommand{\slog}{\mc{L}og}
\begin{document}
\title{Around the nearby cycle functor for arithmetic $\ms{D}$-modules}
\author{Tomoyuki Abe}
\date{}
\maketitle

\begin{flushright}
 {\it Dedicated to Professor Shuji Saito\\
 on the occasion of his 60th birthday}
\end{flushright}

\begin{abstract}
 We will establish a nearby and vanishing cycle formalism for the
 arithmetic $\ms{D}$-module theory following Beilinson's philosophy.
 As an application, we define smooth objects in the framework of
 arithmetic $\ms{D}$-modules whose category is equivalent to the
 category of overconvergent isocrystals.
\end{abstract}

\section*{Introduction}
In this paper, we establish a theory of nearby/vanishing cycle functor
in the framework of arithmetic $\ms{D}$-modules and give some
applications. Unipotent nearby/vanishing cycle formalism has already
been established by the author together with D. Caro in \cite{AC2}
after the philosophy of Beilinson. Beilinson's philosophy
(cf.\ \cite[Remark after Corollary 3.2]{Beglue}) also tells us
how to go from unipotent nearby/vanishing cycle functors to the full
ones, and in fact, this philosophy underlies the argument
of \cite[Lemma 2.4.13]{A}.
The aim of this article is to carry this out more systematically so that
the nearby/vanishing cycle formalism is also accessible in the $p$-adic
cohomology theory.

Now, let us clarify what properties make full nearby/vanishing cycle
functors different from unipotent counterpart. Let $k$ be a perfect
field of positive characteristic. Given a morphism of finite type
$f\colon X\rightarrow\mb{A}^1_k$, and a ``$p$-adic coefficient object''
$\ms{M}$, we have already defined unipotent nearby/vanishing cycles
$\Psi^{\mr{un}}_f(\ms{M})$ and $\Phi^{\mr{un}}_f(\ms{M})$ as objects on
$X_0:=X\times_{\mb{A}^1}\{0\}$.
These functors are compatible with pushforward by proper morphisms and
pullback by smooth morphisms.
An important property of full nearby cycle functor is that it computes
the ``cohomology of the generic fiber'' when $f$ is proper.
However, $\Psi^{\mr{un}}$ is not powerful enough to compute the
cohomology. Let us explain what this means. Consider the simplest
possible situation, namely $X=\mb{A}^1$ and $f=\mr{id}$.
Consider the $p$-adic coefficient $\ms{M}$ defined by the differential
equation
\begin{equation*}
 x^2\partial-\pi=0.
\end{equation*}
This differential equation has singularity at $0$, and in fact, we may
prove that the equation is trivialized by an Artin-Schreier type
covering. The ``cohomology'' of $\ms{M}$ around the generic point of
$\mb{A}^1$ is merely the ``fiber'' of $\ms{M}$ at the generic point
because we are taking $f=\mr{id}$.
Thus, this should be some vector
space of dimension equal to the rank of $\ms{M}$, which is $1$ in this
situation. In particular,
$\Psi_{\mr{id}}(\ms{M})$ should not be zero. However, we may
compute that $\Psi^{\mr{un}}_{\mr{id}}(\ms{M})=0$. Thus,
$\Psi^{\mr{un}}_{\mr{id}}(\ms{M})$ does not meet our need.
Beilinson suggests to consider
$\bigoplus_{\ms{L}}\Psi^{\mr{un}}(\ms{M}\otimes\ms{L})$ where $\ms{L}$
runs over all the irreducible ``local system on a disk''.
In the situation above, $\ms{M}^{\vee}$ (where $(-)^\vee$ denotes the
dual) should be considered as an irreducible local system on the disk
around $0\in\mb{A}^1$. Thus the contribution from
$\Psi^{\mr{un}}(\ms{M}\otimes\ms{M}^\vee)$ does not vanish, which gives
us the correct computation of the cohomology of the generic fiber in
terms of nearby cycle functor.

Even though it is straightforward what to do philosophically, some
technical issues come in. First of all, the unipotent nearby/vanishing
cycle functors we have already defined {\em a priori} depends on the
choice of ``parameter'', whereas it should not be ideally.
This issue is treated in \S\ref{dfnfunc}.
Secondly, it is not clear from the definition that $\Psi_f$ and $\Phi_f$
have certain finiteness property.
We argue as Deligne to show the finiteness in \S\ref{finthm}.
After constructing nearby/vanishing cycle functors, we give small
applications. In \S\ref{smsec}, we define the category of smooth objects
intrinsically, and show that this category coincides with the category
of overconvergent isocrystals. We also show that this category is stable
under taking pushforward by proper and smooth morphism.
In the final section, \S\ref{locsec}, we propose a category over a
henselian trait which is an analogue of that of $\ell$-adic sheaves, and
show that our nearby/vanishing cycle functors factor through this
category.

\subsection*{Acknowledgment}\mbox{}\\
It is my great pleasure to dedicate this article to Professor Shuji
Saito, with deep respect to him and his mathematics, on the occasion of
his 60th birthday.
As a supervisor, Professor Saito taught me what it is to study
mathematics, encouraged me strongly in many occasions, advised me both
on mathematics and on life.
Without him, my life would not have been as rich.

This research is supported by Grant-in-Aid for Young Scientists (A)
16H05993.

\section{Vanishing cycle functor}
\label{dfnfunc}
\subsection{}
In the whole paper, we fix a (geometric) base tuple $(k,R,K,L)$ (cf.\
\cite[1.4.10, 2.4.14]{A}).
This is a collection of data where $k$ is a perfect
field of characteristic $p>0$, $R$ is a discrete valuation ring whose
residue field is $k$ such that some power of Frobenius automorphism on
$k$ lifts to $R$, $K:=\mr{Frac}(R)$, and $L$ is an algebraic extension
of $K$. Once we fix these data, we are able to define the $L$-linear
triangulated category $D(X)$ for a separated scheme $X$ of finite type
over $k$. This triangulated category is denoted by
$D(X/L_{\emptyset})$ or $D(X/\mf{T})$ where $\mf{T}$ is the fixed base
tuple to be more precise in \cite{A}.
When $L=K$ and $X$ is quasi-projective (or more generally,
realizable), we have a classical and more familiar description of $D(X)$
in terms of arithmetic $\ms{D}$-modules of Berthelot:
Take an embedding $X\hookrightarrow\ms{P}$ where $\ms{P}$ is
a proper smooth formal scheme over $R$. Then $D(X)$ is a full
subcategory of $D^{\mr{b}}_{\mr{coh}}(\DdagQ{\ms{P}})$ satisfying
some finiteness condition called the overholonomicity and support
condition. See \cite[1.1.1]{A} for more details.

The category $D(X)$ is equipped with a t-structure, called the holonomic
t-structure, whose heart is denoted by $\mr{Hol}(X)$. Philosophically,
this category corresponds to the category of perverse sheaves in the
$\ell$-adic theory. Furthermore, $D(X)$ is equipped with 6 functors.
The category $D(X)$ is a closed monoidal category, so we have 2 functors
$\otimes$ and $\shom$.
The unit object is denoted by $L_X$. When $X$ is smooth, $L=K$, and
quasi-projective, $L_X$ is, in fact, represented by the structure sheaf
up to some shift. Given a morphism $f\colon X\rightarrow Y$ between
schemes of finite type, we have 4 more functors:
\begin{equation*}
 f_*, f_!\colon D(X)\rightarrow D(Y),\quad
   f^*, f^!\colon D(Y)\rightarrow D(X).
\end{equation*}
We denote by $f_*$ and $f^*$ for normal pushforward and pullback in
accordance with the $\ell$-adic theory, and not $f_+$, $f^+$ as in
$\cite{A}$. These functors enjoy a lot of standard properties.
Some of the properties are summarized in \cite[1.1.3]{A}, so we do not
recall here. Finally, exclusively in \ref{recdfnfun}, we consider
Frobenius structure. In order to consider this extra structure, we
remark that ``arithmetic base tuple'' (cf.\ \cite[1.4.10,
2.4.14]{A}) should be fixed, which contains some more information than
geometric base tuple.
We do not go into detail here.

\begin{rem*}
 Since \cite[1.1.3]{A} is written only for realizable schemes, let us
 point out where in the paper the corresponding claim for separated
 schemes of finite type can be found. The functors $f_*$, $f^*$ are
 defined in 2.3.7 and 2.3.10. The functor $\otimes$ is defined in
 2.3.14, and Proposition 2.3.15 implies the existence of $\shom$.
 The functor $f_!$ is defined in 2.3.21, and $f^!$ in 2.3.32.
 The coincidence of $f_!$ and $f_*$ when $f$ is proper follows by
 construction, and the base change is checked in 2.3.22.
 The projection formula is in 2.3.35, the K\"{u}nneth formula is in
 2.3.36, and the localization sequence is in 2.2.9. Duality results
 as well as trace formalism are also written in 2.3.
\end{rem*}

\subsection{}
A (filtered) projective system $``\invlim"_{i\in I}X_i$
is said to be {\em affine \'{e}tale} if all the morphism $X_i\rightarrow
X_j$ are affine and \'{e}tale. By [EGA IV, Proposition 8.2.3], the
projective limit is representable in the category of schemes over $k$.
Let $\mr{Sch}^{\mr{ft}}(k)$ be the category of schemes {\em
separated} of finite type over $k$. We denote by $\mr{Sch}(k)$ the full
subcategory of {\em noetherian} schemes over $k$ which can be written as
the projective limit of an affine \'{e}tale inductive system in
$\mr{Sch}^{\mr{ft}}(k)$.
From now on, we always mean an object of $\mr{Sch}(k)$ by simply saying
schemes. In particular, schemes are assumed noetherian.

\begin{lem}
 \begin{enumerate}
  \item Any scheme in $\mr{Sch}(k)$ is separated.
       
  \item Let $S\in\mr{Sch}(k)$, and $X\rightarrow S$ be a morphism of
	finite type. Then $X\in\mr{Sch}(k)$ as well.

  \item The category $\mr{Sch}(k)$ is closed under taking henselization
	(resp.\ strict henselization).
 \end{enumerate}
\end{lem}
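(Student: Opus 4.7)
The plan is to reduce all three statements to standard cofiltered-limit descent results from [EGA~IV, \S8].

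For (1), write $X = \varprojlim_{i \in I} X_i$ with $X_i \in \mr{Sch}^{\mr{ft}}(k)$ and all transition maps $X_i \to X_j$ affine. Since fibre products commute with cofiltered limits, $X \times_k X = \varprojlim_i (X_i \times_k X_i)$, and the induced transition maps on the squares remain affine, as affine morphisms are stable under base change and composition. The diagonal $\Delta_X$ is then the limit of the closed immersions $\Delta_{X_i}$ along this affine system, hence itself a closed immersion by the standard limit arguments of [EGA~IV, 8.10.5]. Therefore $X$ is separated.

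For (2), write $S = \varprojlim_i S_i$ with $S_i \in \mr{Sch}^{\mr{ft}}(k)$ and transitions affine \'{e}tale. By [EGA~IV, 8.8.2 and 8.10.5], the finite type morphism $X \to S$ descends to a separated finite type morphism $X_{i_0} \to S_{i_0}$ for some index $i_0$ (after enlarging $i_0$ if necessary to secure separatedness). Setting $X_i := X_{i_0} \times_{S_{i_0}} S_i$ for $i \geq i_0$, each $X_i$ is separated of finite type over $k$, since $X_i \to S_i \to \mr{Spec}(k)$ is a composition of separated finite type morphisms; the transitions $X_i \to X_j$ are base changes of the affine \'{e}tale maps $S_j \to S_i$, hence themselves affine \'{e}tale; and one checks $X = \varprojlim_{i \geq i_0} X_i$. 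Noetherianity follows from $X \to S$ being of finite type over the noetherian scheme $S$.

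For (3), fix $x \in X$ and write $X = \varprojlim_i X_i$ as above, letting $x_i$ denote the image of $x$ in $X_i$. The henselization $X^h$ is by definition the cofiltered limit $\varprojlim V$ of affine \'{e}tale neighbourhoods $(V, v) \to (X, x)$. By [EGA~IV, 8.8.2] every such $V \to X$ is cofinally the pullback along $X \to X_i$ of an affine \'{e}tale neighbourhood $(U_i, u_i) \to (X_i, x_i)$. Reindexing the resulting double limit, $X^h = \varprojlim_{(i, U_i)} U_i$, where each $U_i$ is affine \'{e}tale over $X_i$ and therefore lies in $\mr{Sch}^{\mr{ft}}(k)$, and all transitions are affine \'{e}tale. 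The strict henselization is treated identically upon restricting the indexing category to neighbourhoods realizing a fixed separable closure of $k(x_i)$.

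The main delicate point is the descent step in (2) and the cofinality argument in (3); once the relevant inputs from [EGA~IV, \S8] are in hand, what remains is the routine verification that the produced cofiltered systems satisfy the defining conditions of $\mr{Sch}(k)$.
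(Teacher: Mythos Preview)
Your argument is essentially correct, but the emphasis is almost the mirror image of the paper's, and there is one omission worth flagging.

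For (1), the paper is quicker: since the transitions $X_j \to X_i$ are affine, the structural map $X \to X_i$ is affine, hence separated, and composing with the separated map $X_i \to \mathrm{Spec}(k)$ finishes. Your diagonal argument is valid, though the citation [EGA~IV, 8.10.5] is not quite on point---that result concerns descending a property of a single morphism along a base-change tower, not showing that a cofiltered limit of closed immersions is a closed immersion. The fact you actually use is immediate on rings (filtered colimits of surjections are surjections), so no harm done, but the paper's route avoids this detour entirely.

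For (2), you are simply unpacking the paper's one-line reference to [EGA~IV, 8.8.2]; the approaches coincide.

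For (3), you and the paper attend to complementary halves of the verification. You exhibit $X^h$ as $\varprojlim_{(i,U_i)} U_i$ with $U_i \in \mathrm{Sch}^{\mathrm{ft}}(k)$ and affine \'{e}tale transitions---the paper takes this as evident from the definition of henselization. Conversely, you never check that $X^h$ is noetherian, which is a defining condition for membership in $\mathrm{Sch}(k)$ and is exactly the point the paper isolates, citing [EGA~IV, 18.6.6, 18.8.8]. A limit of noetherian schemes along affine \'{e}tale maps need not be noetherian in general, so this is not automatic from your presentation; you should add a sentence invoking those references.
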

\begin{proof}
 The first claim follows since, writing $X=\invlim X_i$ with
 $X_i\in\mr{Sch}^{\mr{ft}}(k)$, $X_i$ is assumed separated and
 $X\rightarrow X_i$ is affine. The second claim is [EGA IV, 8.8.2].
 For the last claim, we only need to check that the henselization and
 the strict henselization of a point of a noetherian scheme are
 noetherian, but these are [EGA IV, 18.6.6, 18.8.8].
\end{proof}

\subsection{}
Let us introduce the triangulated category of arithmetic
$\ms{D}$-modules on the schemes in $\mr{Sch}(k)$.
Let $X\in\mr{Sch}(k)$. By definition, we may write $X\cong\invlim_{i\in
I} X_i$ where $X_i\in\mr{Sch}^{\mr{ft}}(k)$ and $``\invlim"_{i\in I}X_i$
is affine \'{e}tale. Let $i\rightarrow j$ in
$I$. Since the induced morphism $\phi\colon X_i\rightarrow X_j$ is
\'{e}tale, we have the isomorphism between pull-back functors
$\phi^*\cong\phi^!\colon D(X_j)\rightarrow D(X_i)$.
We define
\begin{equation*}
 D(X):=2\mbox{-}\indlim_{i\in I} D(X_i).
\end{equation*}
Since $\phi^*$ is t-exact with respect to the t-structure, $D(X)$ is
also equipped with a t-structure, whose heart is still denoted by
$\mr{Hol}(X)$.
This category is independent of the choice of projective system up to
canonical isomorphism, which justifies the notation $D(X)$.
Now, assume given {\em any} morphism $f\colon X\rightarrow Y$ in
$\mr{Sch}(k)$.
Then we can find a morphism of affine \'{e}tale projective systems
$``\invlim"_{i\in I}X_i\rightarrow``\invlim"_{j\in J}Y_j$ in
$\mr{Sch}^{\mr{ft}}(k)$ which converges to $f$.
This presentation makes it possible to extend the pull-back and
extraordinary pull-back functor on $\mr{Sch}^{\mr{ft}}(k)$ to
\begin{equation*}
 f^*, f^!\colon D(Y)\rightarrow D(X).
\end{equation*}
Independence of presentation follows easily.
Assume further that $f$ is of finite type. 
Then [EGA IV, 8.8.2] implies that by changing the projective system
$``\invlim"_{i\in I}X_i$ in $\mr{Sch}^{\mr{ft}}(k)$ if necessarily, we
may assume that $I=J$ and that for any $i\rightarrow j$ in $J$ the
following diagram is cartesian:
\begin{equation*}
 \xymatrix{
  X_i\ar[r]^-{\phi_X}\ar[d]_{f_i}\ar@{}[rd]|\square&
  X_j\ar[d]^{f_j}\\
 Y_i\ar[r]^-{\phi_Y}&Y_j.
  }
\end{equation*}
Since $\phi_Y$ is \'{e}tale, we have the canonical isomorphisms
$\phi^*_{Y}\circ f_{j*}\cong f_{i*}\circ\phi_X^*$,
$f_{i!}\circ\phi_X^*\cong \phi^*_{Y}\circ f_{j!}$ by base change and
$\phi_{\star}^*\cong\phi_{\star}^!$.
Thus, we have the push-forward and extraordinary push-forward functor
\begin{equation*}
 f_*, f_!\colon D(X)\rightarrow D(Y).
\end{equation*}

\begin{lem}
 \label{noearhol}
 For a scheme $X$, $\mr{Hol}(X)$ is noetherian and artinian category.
\end{lem}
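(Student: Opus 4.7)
The plan is to reduce the statement to the case $X \in \mr{Sch}^{\mr{ft}}(k)$, which is essentially contained in \cite{A} (\cite[1.1.3]{A} for realizable $X$, with extension to separated finite type via Zariski descent, using that being noetherian/artinian as an abelian category is local). For a general $X \in \mr{Sch}(k)$, the definition gives $X \cong \invlim_{i \in I} X_i$ with $X_i \in \mr{Sch}^{\mr{ft}}(k)$ and affine \'{e}tale transition maps $\phi_{ij}\colon X_j \to X_i$, and I will exploit the presentation $\mr{Hol}(X) = 2\mbox{-}\indlim_{i} \mr{Hol}(X_i)$ together with the fact that each transition functor $\phi_{ij}^*$ is t-exact and faithful on holonomic hearts.

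Given $M \in \mr{Hol}(X)$, by construction of the 2-ind-limit there exist some index $i$ and some $M_i \in \mr{Hol}(X_i)$ with $M \cong \phi_i^* M_i$, where $\phi_i\colon X \to X_i$ is the projection. I would then show that any strict finite chain of subobjects $M = N_0 \supsetneq N_1 \supsetneq \cdots \supsetneq N_n$ in $\mr{Hol}(X)$ has length bounded by the length of $\phi_{ij}^* M_i$ in $\mr{Hol}(X_j)$ for some $j \geq i$, which is finite by the finite-type case and simultaneously yields both chain conditions. Concretely, since Hom sets in the 2-ind-limit are filtered colimits of Hom sets at finite levels, I can pick a single $j \geq i$ and lifts $N_{\ell,j} \hookrightarrow \phi_{ij}^* M_i$ in $\mr{Hol}(X_j)$ whose pullbacks by $\phi_j$ recover the given chain. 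The non-triviality of each successive quotient $N_\ell/N_{\ell+1}$ in $\mr{Hol}(X)$ survives to some large enough level, as being zero is detected by a filtered colimit of Hom sets, so after enlarging $j$ the chain becomes strict already in $\mr{Hol}(X_j)$, giving the desired bound.

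The main obstacle is the categorical bookkeeping around the 2-ind-limit: showing that strict subobject chains in $\mr{Hol}(X)$ lift to strict chains at some finite stage, and that monomorphisms descend. Both rely on the exactness and faithfulness of \'{e}tale pullback on holonomic hearts, so that kernels of candidate monos and quotients of strict inclusions are detected at a finite level via the commutation of filtered colimits with finite limits. Once this categorical step is in place, noetherianness and artinianness follow at once from the finite-type case, as every object in $\mr{Hol}(X)$ has finite length.
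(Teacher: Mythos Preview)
Your lifting argument establishes, for each fixed strict chain $M=N_0\supsetneq\cdots\supsetneq N_n$ in $\mr{Hol}(X)$, the existence of an index $j$ (depending on the chain) at which the chain lifts strictly, whence $n\leq\ell\bigl(\phi_{ij}^*M_i\bigr)$ in $\mr{Hol}(X_j)$. The gap is that this bound is not uniform: the index $j$ depends on the chain, and the length $\ell\bigl(\phi_{ij}^*M_i\bigr)$ can genuinely grow with $j$, because \'{e}tale pullback need not preserve length (a skyscraper at a closed point of degree $>1$ splits after base change, for instance). So from ``every finite chain is bounded by some finite number'' you cannot conclude that chains stabilize; an infinite strict chain could have its length-$n$ truncations lift to stages $j_n\to\infty$ with $\ell\bigl(\phi_{ij_n}^*M_i\bigr)\to\infty$. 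Your appeal to faithfulness of \'{e}tale pullback does not help here: faithfulness (in the $2$-colimit sense that zero objects are eventually zero) is exactly what makes the lifting of a \emph{given} finite chain work, but it says nothing about a uniform stage. To repair the argument along your lines you would need to prove that $\ell\bigl(\phi_{ij}^*M_i\bigr)$ stabilizes for large $j$, which is essentially as hard as the lemma itself.

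The paper proceeds quite differently and sidesteps this issue. It argues by noetherian induction on the support: one first observes that any $\ms{F}\in\mr{Hol}(X)$ is, on a dense open $U$ of its support, the pullback of a smooth object from some finite level, and such smooth objects are of finite length. Then one shows that for an open immersion $j\colon V\hookrightarrow X$ the intermediate extension $j_{!*}$ takes irreducible objects to irreducible objects (the standard argument, as in \cite[Proposition 1.4.7]{AC}). Together these give that every object has finite length, hence the category is noetherian and artinian. This approach never needs to control how lengths behave along the pro-\'{e}tale tower.
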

\begin{proof}
 Let $U$ be a scheme, and write $U\cong\indlim U_i$ where
 $U_i\in\mr{Hol}(U)$. Assume $U_i$ is smooth.
 We say that $\ms{F}\in\mr{Hol}(U)$ is {\em smooth} if there exists
 $i\in I$ and $\ms{F}'\in\mr{Hol}(U_i)$ whose pullback is $\ms{F}$ such
 that $\ms{F}'$ is smooth on $U_i$ in the sense of \cite[1.3.1]{A}.
 It is easy to check that any smooth objects is of finite length, and
 for any $\ms{F}\in\mr{Hol}(X)$, there exists a smooth open subscheme
 $U\subset\mr{Supp}(\ms{F})$ such that $\ms{F}|_U$ is smooth on $U$.
 Now, let $j\colon V\hookrightarrow X$ be an open immersion.
 It suffices to check that for an irreducible object
 $\ms{F}_V\in\mr{Hol}(V)$, $j_{!*}(\ms{F}_V)$ remains to be
 irreducible.
 The verification is standard (see, for example, \cite[Proposition
 1.4.7]{AC}).
\end{proof}

\subsection{}
\label{recdfnfun}
{\em Exclusively in this paragraph, we consider Frobenius structure for
the future reference}. The reader who does not need to consider
Frobenius structure may simply ignore Tate twists appearing in this
paragraph.

Let $\pi\colon X\rightarrow\mb{A}^1_k$ be a morphism of finite type.
Then the exact functors
\begin{equation*}
 \Psi^{\mr{un}}_\pi,\Phi^{\mr{un}}_\pi\colon
  \mr{Hol}(X)\rightarrow\mr{Hol}(X_0)
\end{equation*}
are defined. Let us now recall the definition briefly.
We put $\mc{O}_{\mb{G}_{\mr{m}}}:=\mc{O}_{\widehat{\mb{P}}^1_R,\mb{Q}}
(^\dag\{0,\infty\})$. We define $\slog^n$ for an integer $n\geq0$ as
follows:
\begin{equation*}
 \slog^n:=\bigoplus_{k=0}^{n-1}
  \mc{O}_{\mb{G}_{\mr{m}}}\cdot\log_t^{[k]},
\end{equation*}
the free $\mc{O}_{\mb{G}_{\mr{m}}}$-module of rank $n$ generated by
the symbols $\log_t^{[k]}$. For the later use, we denote
$k!\cdot\log_t^{[k]}$ by $\log_t^k$.
There exists a unique $\DdagQ{\widehat{\mb{P}}^1}$-module structure on
$\slog^n$ so that for $k\geq0$ and $g\in\mc{O}_{\mb{G}_{\mr{m}}}$,
\begin{equation*}
 \partial_t(g\cdot \log_t^{[k]})=\partial_t(g)\cdot\log_t^{[k]}+
  (g/t)\cdot\log_t^{[k-1]},
\end{equation*}
where $\log_t^{[j]}:=0$ for $j<0$.
There is a canonical Frobenius structure on $\slog^n$.
This defines an object of $\mr{Hol}(\mb{A}^1)$ when $L=K$. If
$L\supsetneq K$, we simply extend the scalar.
We have the following exact sequence:
\begin{equation*}
 0\rightarrow\slog^n\rightarrow\slog^{n+m}\rightarrow
  \slog^m(-n)\rightarrow0,
\end{equation*}
where the first homomorphism sends $\log_t^{[i]}$ to $\log_t^{[i]}$ and
the second sends $\log_t^{[i]}$ to $\log_t^{[i-n]}$.
We follow the easy-to-describe definitions of various functors of
Beilinson (cf.\ \cite[Remark 2.6 (i)]{AC2}).

Recall we are given $\pi\colon X\rightarrow\mb{A}^1$, and put $j\colon
X\setminus X_0\hookrightarrow X$, the open immersion, and $i\colon
X_0\hookrightarrow X$, the closed immersion.
Now, we put $\slog^n_\pi:=\pi^*\slog^n$.
Using this we define
\begin{equation*}
 \Pi^{0,i}_{!*}:=
  \indlim_n\,\mr{Ker}\bigl(j_!(\ms{F}\otimes\slog^{n+i}_\pi)(i-1)
  \rightarrow
  j_*(\ms{F}\otimes\slog^{n}_\pi)(-1)\bigr),
\end{equation*}
and put $\Psi^{\mr{un}}_\pi:=\Pi^{0,0}_{!*}(1)$,
$\Xi_\pi:=\Pi^{0,1}_{!*}$.
A key result of \cite[Lemma 2.4]{AC2} is that these limits are
representable in $\mr{Hol}(X)$. We have the following complex in
$\mr{Hol}(X)$:
\begin{equation*}
 j_!\ms{F}\rightarrow\Xi_\pi(\ms{F})\oplus\ms{F}\rightarrow
  j_*\ms{F},
\end{equation*}
and define $\Phi^{\mr{un}}_\pi(\ms{F})$ to be the cohomology of this
complex. 
Here, the homomorphism $j_!\ms{F}\rightarrow\Xi_\pi(\ms{F})$ is the
obvious one, and $\Xi_\pi(\ms{F})\rightarrow j_*\ms{F}$ is the inductive
limit of the connecting homomorphism of the following diagram, recalling
$\slog^1_\pi\cong L_{X\setminus X_0}$:
\begin{equation*}
 \xymatrix{
  &0\ar[r]\ar[d]&j_!(\ms{F}\otimes\slog_\pi^{n+1})\ar@{=}[r]\ar[d]&
  j_!(\ms{F}\otimes\slog_\pi^{n+1})\ar[d]\ar[r]&0\\
 0\ar[r]&
  j_*(\ms{F}\otimes\slog_\pi^{1})
  \ar[r]
  &
  j_*(\ms{F}\otimes\slog_\pi^{n+1})
  \ar[r]
  &
  j_*(\ms{F}\otimes\slog_\pi^{n})(-1)
  \ar[r]
  &
  0.
  }
\end{equation*}
Moreover, this diagram induces the exact sequence
$0\rightarrow\Psi^{\mr{un}}(\ms{F})\rightarrow\Xi_\pi(\ms{F})
\rightarrow j_*(\ms{F})\rightarrow0$, where the surjectivity of the last
homomorphism is also a part of the key result of \cite{AC2}.
This short exact sequence together with the definition of the vanishing
cycle functor yield the following fundamental exact triangle:
\begin{equation}
 \label{fundexunip}
 i^*[-1]\rightarrow\Psi^{\mr{un}}_\pi
  \rightarrow\Phi^{\mr{un}}_\pi\xrightarrow{+1}.
\end{equation}

\begin{rem*}
 \begin{enumerate}
  \item In \cite{AC2}, the object $\mc{I}^{a,b}$ is used instead of
	$\slog^n$. We may check easily that there exists an isomorphism
	$\mc{I}^{a,b}\xrightarrow{\sim}\slog^{b-a}$ where $s^lt^s$,
	using the notation of \cite[2.3]{AC2}, is sent to
	$\log_t^{[l-a-1]}$.
	The embedding $\slog_t^n\hookrightarrow\slog_t^{n+1}$ is
	compatible with the embedding
	$\mc{I}^{a,b}\hookrightarrow\mc{I}^{a,b+1}$.
	The description using $\mc{I}^{a,b}$ is convenient to understand
	the relation with the dual functor, but in order to prove the
	theorem below, $\slog^n$ description reduces notation.
	
  \item We defined $\Psi^{\mr{un}}$ as $\Pi_{!*}^{0,0}(1)$, but in
	\cite{AC2}, following Beilinson, we did not put this Tate twist
	in the definition.
	This Tate twist is put in order that no Tate twist appears in
	(\ref{fundexunip}). Since we do not consider Frobenius structure
	from the next paragraph, we may forget this confusing Tate
	twists.
 \end{enumerate}
\end{rem*}

\subsection{}
Now, let $S$ be a scheme of finite type over $k$, and $s\in S$ be a
regular point of codimension $1$.
Let $\pi\colon X\rightarrow S$ be a morphism of finite type.
For a morphism $h\colon S\rightarrow\mb{A}^1$ such that $h(s)=0$, the
functor $\Psi^{\mr{un}}_{h\circ\pi}$ is defined.
In this paper, for a morphism of schemes $f\colon X\rightarrow Y$ and a
point $y\in Y$, the fiber $X\otimes_Yk(y)$ is denoted by $X_y$.

\begin{thm*}
 The functors $\Psi^{\mr{un}}_{h\circ\pi}|_{X_s}$,
 $\Phi^{\mr{un}}_{h\circ\pi}|_{X_s}$
 does not depend on the choice of $h$ up to canonical equivalence.
 This justifies to denote these functors by $\Psi^{\mr{un}}_\pi$ and
 $\Phi^{\mr{un}}_\pi$ respectively.
\end{thm*}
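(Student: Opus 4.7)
The strategy is to reduce to two atomic comparisons by exploiting the codimension-$1$ regularity at $s$ together with the multiplicative structure of $\slog$.

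Since the defining ind-system for $\Pi^{0,i}_{!*}$ only involves $j_!$, $j_*$, and tensor products, all compatible with smooth/\'{e}tale base change, the functor $\Psi^{\mr{un}}_{h\circ\pi}|_{X_s}$ depends only on the \'{e}tale-local structure of $(S,s)$ together with the germ of $h$ at $s$. After passing to an \'{e}tale neighborhood, $\mc{O}_{S,s}$ is a DVR; fix a uniformizer $\varpi$. Then any $h$ with $h(s)=0$ has the form $h = u\varpi^n$ with $u \in \mc{O}_{S,s}^\times$ and $n\geq 1$. It therefore suffices to produce canonical isomorphisms in two atomic cases: \emph{(a)} replacing $h$ by $uh$ for a unit $u$ near $s$, and \emph{(b)} replacing $h$ by $h^n$.

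For \emph{(a)}, I would exploit the identity $\log(uh) = \log(u) + \log(h)$, which, via the multiplication map $\mb{G}_{\mr{m}} \times \mb{G}_{\mr{m}} \to \mb{G}_{\mr{m}}$ and its effect on $\slog^k$, yields a canonical comparison between $\slog^k_{uh\circ\pi}$ and $\slog^k_{h\circ\pi}$ on $X\setminus X_0$. Because $u(s)\neq 0$, the extra contribution from $\log(u)$ extends smoothly across $X_s$ and is therefore annihilated in the kernel construction of $\Pi^{0,i}_{!*}$ after restriction to $X_s$. For \emph{(b)}, the $n$-th power map on $\mb{G}_{\mr{m}}$ is finite \'{e}tale and, via $\log(h^n) = n\log(h)$, induces a canonical identification $\slog^k_{h^n\circ\pi} \cong \slog^k_{h\circ\pi}$ compatible with the filtration $\slog^k\hookrightarrow\slog^{k+1}$.

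The hard part will be to verify that the two atomic comparisons glue to a \emph{canonical} isomorphism for arbitrary $h_0, h_1$---i.e., that they satisfy a cocycle condition and are compatible with the transition maps in $0 \to \slog^n \to \slog^{n+m} \to \slog^m(-n) \to 0$ and with the $j_!$, $j_*$ extensions entering the definition of $\Pi^{0,i}_{!*}$. Once the canonical isomorphism for $\Psi^{\mr{un}}$ is established, the corresponding statement for $\Phi^{\mr{un}}$ follows from the fundamental exact triangle (\ref{fundexunip}) and the functoriality of cones in $\mr{Hol}(X_s)$.
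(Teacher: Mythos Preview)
Your plan for the unit case (a) is essentially the paper's approach. The paper makes your ``annihilation'' step precise as follows. One first writes down an explicit $\ms{D}$-module map $\alpha\colon\slog^n_{uh}\to\slog^n_u\otimes\slog^n_h$ coming from the binomial expansion of $(\log u+\log h)^k$; since $u$ factors through $\mb{G}_{\mr{m}}$, the object $\slog^n_u$ is an iterated extension of the unit $L_S$, so that $j_\star(-\otimes\slog^n_{u\pi})\cong j_\star(-)\otimes\slog^n_{u\pi}$ for $\star\in\{!,*\}$. Feeding this into the ind-system produces an intermediate functor $\Psi_{u,h}\cong(\indlim_n\slog^n_{u\pi}|_D)\otimes\Psi_h$, and the comparison $\phi_{h,uh}\colon\Psi_{uh}\to\Psi_h$ is then obtained by composing with the projection $\indlim_n\slog^n_u|_D\to L_D$ onto the $\log_u^{[0]}$-component. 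Your anticipated cocycle check is exactly the associativity of $\alpha$ under $u\mapsto uv$.

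Two remarks on the rest. First, the paper does \emph{not} treat your case (b); its proof only compares $h$ with $uh$ for units $u$, which already suffices to compare any two local parameters at $s$. If you do want the power case, note that your argument (``the $n$-th power map on $\mb{G}_{\mr{m}}$ is finite \'{e}tale'') fails when $p\mid n$; one has to invoke instead the Frobenius structure on $\slog^k$.

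Second, and more importantly, your deduction of the $\Phi^{\mr{un}}$ statement from the $\Psi^{\mr{un}}$ statement via~(\ref{fundexunip}) does not work as written. Cones are not functorial, and even after passing to the four-term exact sequence $0\to\H^{-1}i^*\to\Psi^{\mr{un}}\to\Phi^{\mr{un}}\to\H^0 i^*\to 0$ in $\mr{Hol}(X_s)$, a canonical isomorphism on $\Psi^{\mr{un}}$ (even one compatible with the map from $\H^{-1}i^*$) only identifies the subobject $\mr{Coker}(\H^{-1}i^*\to\Psi^{\mr{un}})$ of $\Phi^{\mr{un}}$, not the extension by $\H^0 i^*$. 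The paper therefore handles $\Phi^{\mr{un}}$ separately: it builds $\Xi_{u,h}$ and $\Phi_{u,h}$ by the same $\slog^n_u\otimes\slog^n_h$ device and runs the identical projection argument directly on the three-term complex $j_!\to\Xi\oplus\mr{id}\to j_*$ that defines $\Phi^{\mr{un}}$.
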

\begin{proof}
 Let $\mb{A}^2_{(x,y)}\rightarrow\mb{A}^1_t$ be the morphism sending
 $t$ to $xy$. On $\mb{A}^2$, we construct a homomorphism
 \begin{equation*}
  \alpha\colon
   \slog_{xy}^{n}\rightarrow\slog_{x}^{n}\otimes\slog_y^n.
 \end{equation*}
 by sending $(\log uh)^k$ to $\sum_{i=0}^k\binom{k}{i}(\log
 u)^i\otimes(\log h)^{k-i}$. It is easy to check that this defines a
 homomorphism of $\ms{D}^\dag$-modules.
 Now, shrink $S$ around $s$, which is allowed since we only need the
 equivalence after $|_{X_s}$,
 so that the closure of $s$, is a smooth divisor denoted by $D$.
 Let $u,v\in\mc{O}_S$. These functions define a morphism
 $\rho\colon S\rightarrow\mb{A}^2$ by sending $x$, $y$ to $u$, $v$
 respectively. Then we get a homomorphism in $\mr{Hol}(S)$
 \begin{equation*}
  \alpha_{u,v}:=\rho^*(\alpha)\colon
   \slog_{uv}^n\rightarrow\slog_u^n\otimes\slog_v^n.
 \end{equation*}
 Given $u,v,w\in\mc{O}_S$, the following diagram is commutative:
 \begin{equation}
  \label{multlogsh}
  \xymatrix{
   \slog^n_{uvw}\ar[r]\ar[d]&
   \slog^n_{u}\otimes\slog_{vw}\ar[d]\\
  \slog^n_{uv}\otimes\slog^n_{w}\ar[r]&
   \slog^n_{u}\otimes\slog^n_{v}\otimes\slog^n_{w}.
   }
 \end{equation}

 Now, for $h\colon S\rightarrow\mb{A}^1$, we denote
 $\Psi^{\mr{un}}_{h\pi}$, $\Phi^{\mr{un}}_{h\pi}$,
 $\Xi^{\mr{un}}_{h\pi}$ by $\Psi_h$, $\Phi_h$, $\Xi_h$ respectively.
 Take $u\in\mc{O}_S^{\times}$.
 Because the image of the associated morphism $u\colon
 S\rightarrow\mb{A}^1$ is contained in $\mb{G}_{\mr{m}}$, the object
 $\slog_u^n$ is an iterated extension of the trivial object $L_S$.
 This implies that for $\star\in\{!,*\}$,
 \begin{equation}
  \label{smcommjph}
   j_{\star}\bigl(\ms{F}\otimes\slog_{u\pi}^n
   \otimes\slog_{h\pi}^n\bigr)
   \cong
   j_{\star}(\ms{F}\otimes\slog_{h\pi}^n)
   \otimes\slog_{u\pi}^n,
 \end{equation}
 where $j\colon X\setminus(h\circ\pi)^{-1}(0)\hookrightarrow X$.
 Define
 \begin{equation*}
   \Pi^{0,i}_{(u,h)!*}(\ms{F}):=
    \indlim_n\,\mr{Ker}\bigl(
    j_!(\ms{F}\otimes\slog_{u\pi}^{n+i}\otimes
    \slog_{h\pi}^{n+i})
    \rightarrow
    j_*(\ms{F}\otimes\slog_{u\pi}^{n+i}\otimes
    \slog_{h\pi}^{n})
    \bigr),
 \end{equation*}
 and put $\Psi_{u,h}:=\Pi^{0,0}_{(u,h)!*}$,
 $\Xi_{u,h}:=\Pi^{0,1}_{(u,h)!*}$ as usual.
 Then, (\ref{smcommjph}) induces the canonical isomorphisms
 \begin{equation*}
  \Xi_{u,h}\cong \bigl(\indlim_{n}\slog^n_{u\pi}\bigr)
   \otimes\Xi_h,\quad
  \Psi_{u,h}\cong \bigl(\indlim_{n}\slog^n_{u\pi}|_D\bigr)
   \otimes\Psi_h
 \end{equation*}
 as Ind objects. We have a homomorphism
 \begin{equation*}
  \slog^n_{u}|_D\cong\bigoplus_{k=0}^{n-1}
   L_D\cdot\log_u^{[k]}
   \xrightarrow{\mr{pr}_0}L_D,
 \end{equation*}
 where $\mr{pr}_0$ denotes the projection by the factor indexed by
 $\log^{[0]}_u$.
 By taking the limit, we have a homomorphism
 $\mr{pr}_0\colon\indlim_{n}\slog^n_{u}|_D\rightarrow L_D$.
 Composing everything, we have
 \begin{equation*}
  \phi_{h,uh}\colon
   \Psi_{uh}\xrightarrow{\alpha_{u,v}}
   \Psi_{u,h}\cong
   \bigl(\indlim_{a}\slog^n_{u}|_D\bigr)
   \otimes\Psi_h
   \xrightarrow{\mr{pr}_0}
   \Psi_h.
 \end{equation*}
 We may check easily that $\phi_{h,h}=\mr{id}$.
 Using (\ref{multlogsh}), it is also an easy exercise to show that
 $\phi_{vuh,uh}\circ\phi_{uh,h}=\phi_{vuh,h}$ for
 $v\in\mc{O}_S^{\times}$.
 Thus, $\phi_{uh,h}$ is an isomorphism for any $u\in\mc{O}_S^{\times}$.
 In order to show the theorem for $\Phi^{\mr{un}}$, we define
 $\Phi_{u,h}$ by the cohomology of the following complex
 \begin{equation*}
  j_!\ms{F}\otimes\indlim_{n}\slog^n_{u\pi}
   \rightarrow\Xi_{u,h}(\ms{F})\oplus
   \bigl(\ms{F}\otimes\indlim_{n}\slog^n_{u\pi}\bigr)
   \rightarrow
  j_*\ms{F}\otimes\indlim_{n}\slog^n_{u\pi},
 \end{equation*}
 and argue similarly.
\end{proof}

\subsection{}
Let $X\xrightarrow{\pi}S\xrightarrow{h}S'$ be morphisms of schemes of
finite type over $k$, $h$ is dominant, and $s\in S$, $s'\in S'$ be
codimension $1$ regular points such that $s$ is sent to $s'$. We have
the canonical morphism $h'\colon X_s\rightarrow X_{s'}$. Then by the
construction of nearby/vanishing cycle functors, we have
\begin{equation}
 \label{invbcun}
 h'^*\Psi^{\mr{un}}_{h\circ\pi}\cong\Psi^{\mr{un}}_{\pi},\quad
  h'^*\Phi^{\mr{un}}_{h\circ\pi}\cong\Phi^{\mr{un}}_{\pi}.
\end{equation}

By saying $(S,s,\eta)$ is a {\em henselian trait}, we mean $S$ is the
spectrum of a henselian discrete valuation ring with closed point $s$
and generic point $\eta$.
Let $(S,s,\eta)$ be a henselian trait. Assume given a morphism
$\pi\colon X\rightarrow S$. Even if $X$ and $S$ are not of finite type
over $k$, we may define exact functors
\begin{equation*}
 \Psi^{\mr{un}}_\pi, \Phi^{\mr{un}}_\pi\colon
  \mr{Hol}(X)\rightarrow\mr{Hol}(X_s).
\end{equation*}
Indeed, we can find a diagram
\begin{equation*}
 \xymatrix{
  X\ar[r]^-{\rho'}\ar@/_10pt/[rd]_{\pi}&
  \mc{X}\times_{\mc{S}}S\ar[d]\ar[r]\ar@{}[rd]|\square&
  \mc{X}\ar[d]^{\widetilde{\pi}}\\
 &S\ar[r]^-{\rho}&\mc{S}.
  }
\end{equation*}
Here, $\widetilde{\pi}$ is a morphism of schemes of finite type over
$k$,
$\rho(s)$ is a regular point $s'\in\mc{S}$ of codimension $1$, $\rho'$
is the limit of a projective system of affine \'{e}tale
$\mc{X}\times_{\mc{S}}S$-schemes.
Let $\rho_X\colon X_s\rightarrow\mc{X}_{s'}$. Then we define
$\Psi^{\mr{un}}_{\pi}:=\rho_X^*\Psi^{\mr{un}}_{\widetilde{\pi}}$,
$\Phi^{\mr{un}}_{\pi}:=\rho_X^*\Phi^{\mr{un}}_{\widetilde{\pi}}$.
The equivalence (\ref{invbcun}) implies that these do not depend on the
choice of the diagram.

\subsection{}
Now, let $(S,s,\eta)$ be a {\em strict} henselian trait, and we define
the category $\mr{Hen}(S)$ to be the category of henselian traits over
$S$ which is generically \'{e}tale. Morphisms are $S$-morphisms.
Given a morphism $h\colon S'\rightarrow S$, consider the following
commutative diagram:
\begin{equation*}
 \xymatrix{
  s'\ar[r]^-{i'}\ar[d]_-{h_s}^{\sim}&
  S'\ar[d]^{h}\ar@{}[rd]|\square&
  \eta'\ar[l]_-{j'}\ar[d]^-{h_\eta}\\
 s\ar[r]^-{i}&S&\eta.\ar[l]_-{j}
  }
\end{equation*}
Now, assume given a morphism $f\colon X\rightarrow S$.
Let $f'\colon X\times_SS'\rightarrow S'$.
By abuse of notation, we use the same symbols for the base change from
$S$ to $X$, for example $h\times\mr{id}\colon X\times_SS'\rightarrow X$
is denoted by $h$. We have a canonical morphism
\begin{equation*}
 \Psi^{\mr{un}}_{f}\rightarrow
  h_{s,*}\circ\Psi^{\mr{un}}_{f'}\circ h^*.
\end{equation*}
Since
$\Psi^{\mr{un}}_{f'}\circ h^*\cong\Psi^{\mr{un}}_{f'}\circ
(j'_*h_{\eta}^*j^*)$, we have
$\H^i\bigl(\Psi^{\mr{un}}_{f'}\circ h^*(\ms{F})\bigr)=0$
for any $\ms{F}\in\mr{Hol}(X)$ and $i\neq0$.
Now, we have the following diagram of exact sequences by
(\ref{fundexunip}).
\begin{equation*}
 \xymatrix{
  &
  0\ar[r]\ar[d]&
  \H^{-1}i^*(\ms{F})
  \ar[r]\ar[d]^{\sim}&
  \Psi^{\mr{un}}_f(\ms{F})\ar[d]
  \\
 0\ar[r]&
  \H^{-1}\bigl(h_{s*}\Phi^{\mr{un}}_f(h^*\ms{F})\bigr)
  \ar[r]&
  \H^{-1}\bigl(h_{s*}i'^*(h^*\ms{F})\bigr)
  \ar[r]&
  h_{s*}\Psi^{\mr{un}}_f(h^*\ms{F})
  }
\end{equation*}
Here the middle homomorphism is an isomorphism since $h_s$ is an
isomorphism. This implies that
$\H^{-1}\bigl(h_{s*}\Phi^{\mr{un}}_f(h^*\ms{F})\bigr)=0$.
Together with the other parts of the diagram of long exact sequences, we
have
$\H^{i}\bigl(h_{s*}\Phi^{\mr{un}}_f(h^*\ms{F})\bigr)=0$ for $i\neq0$.
This enables us to define exact functors
$\mr{Ind}\mr{Hol}(X)\rightarrow\mr{Ind}\mr{Hol}(X_s)$ as follows:
\begin{align*}
 \Psi_{f}:=\indlim_{S'\in\mr{Hen}(S)}
  h_{s,*}\circ\Psi^{\mr{un}}_{f'}\circ h^*,\quad
  \Phi_{f}:=\indlim_{S'\in\mr{Hen}(S)}
  h_{s,*}\circ\Phi^{\mr{un}}_{f'}\circ h^*.
\end{align*}
These are endowed with an action of
$I:=\mr{Gal}(\eta^{\mr{sep}}/\eta)$ where $\eta^{\mr{sep}}$ is the
separable closure of $\eta$.

\begin{prop}
 \label{dim0case}
 Let $(S,s,\eta)$ and $(S',s',\eta')$ be strict henselian traits, and
 $\pi\colon S'\rightarrow S$ be a dominant morphism of finite type.
 \begin{enumerate}
  \item For $\ms{F}\in\mr{Hol}(\eta')$, the nearby cycles
	$\Psi_\pi(\ms{F})$ is representable in $\mr{Hol}(s')$.
  \item For $\ms{F}\in\mr{Hol}(\eta')$, we have
	$\mr{rk}(\ms{F})=\mr{rk}(\Psi_\pi(\ms{F}))$.
  \item\label{invbcphi}
       The morphism
       $\pi_s^*\circ\Psi_{\mr{id}}\xrightarrow{\sim}
       \Psi_{\mr{id}}\circ\pi^*$, where
       $\pi_s\colon s'\xrightarrow{\sim}s$
       is the induced morphism, is isomorphic.
 \end{enumerate}
\end{prop}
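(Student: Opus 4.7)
The plan is to reduce all three assertions to a single local quasi-unipotence input for $\ms{F}\in\mr{Hol}(\eta')$: there should exist a morphism $S''_0\to S'$ in $\mr{Hen}(S')$ such that the pullback of $\ms{F}$ to $\eta''_0$ has unipotent local monodromy at $s''_0$, in the sense that $\Psi^{\mr{un}}_{\pi\circ h_0}$ already computes the full local behaviour of $\ms{F}$. This is the $p$-adic analogue of Grothendieck's local monodromy theorem, and in the arithmetic $\ms{D}$-module framework is available via the quasi-unipotence theorems of Andr\'{e}, Mebkhout, and Kedlaya on $p$-adic differential equations on the Robba ring. The main conceptual hurdle of the proof is precisely importing this input into the Ind-categorical setup $\mr{Hol}(\eta')=2\mbox{-}\indlim_{i}\mr{Hol}(U_i)$ and making precise what ``unipotent monodromy'' should mean there.

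Granted this, for (1) I would argue that the Ind-system defining $\Psi_\pi(\ms{F})$ stabilizes at $S''_0$. Concretely, for any refinement $S''_0\to S''_1$ in $\mr{Hen}(S')$, the transition morphism
\begin{equation*}
 h_{0,s,*}\Psi^{\mr{un}}_{\pi_0}h_0^*(\ms{F})\longrightarrow
 h_{1,s,*}\Psi^{\mr{un}}_{\pi_1}h_1^*(\ms{F})
\end{equation*}
should be an isomorphism, because once the monodromy is absorbed at level $S''_0$ further covers contribute nothing new, and the push-forwards $h_{i,s,*}$ are along essentially trivial morphisms between closed points of strict henselian traits (whose residue fields are all separably closed). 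The stable value then represents $\Psi_\pi(\ms{F})$ as an object of $\mr{Hol}(s')$.

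For (2), with the stabilization in hand, I would write $\Psi_\pi(\ms{F})\cong h_{0,s,*}\Psi^{\mr{un}}_{\pi_0}(h_0^*\ms{F})$ and read off the rank directly. The pullback $h_0^*$ is generically \'{e}tale and hence rank-preserving; the unipotent nearby cycle of an object with unipotent monodromy has rank equal to the generic rank, as is immediate from the $\slog^n$-construction (the kernel defining $\Pi^{0,0}_{!*}$ matches the generic rank); and push-forward along the residue-field-trivial morphism $h_{0,s}$ preserves rank.

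For (3), both $\pi_s^*\Psi_{\mr{id}}(\ms{F})$ and $\Psi_{\mr{id}}(\pi^*\ms{F})$ are representable by (1), so the natural comparison morphism can be tested at the stabilising level of the Ind-system. Composition with $\pi$ should furnish a cofinal functor $\mr{Hen}(S')\to\mr{Hen}(S)$, and for each $S''\in\mr{Hen}(S')$ the required compatibility at the unipotent level is the base-change isomorphism (\ref{invbcun}) applied to the appropriate triple of traits. Passing to the Ind-limit then yields the assertion.
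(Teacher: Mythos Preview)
Your overall strategy matches the paper's: invoke Kedlaya's quasi-unipotence (the paper phrases it as semistable reduction for overconvergent $F$-isocrystals, applied after spreading $S$ out to a smooth pair $(Y,D)$) to reach a level where $\Psi^{\mr{un}}$ already sees everything, and then show the Ind-system stabilizes there. Two points, however, are not yet in order.

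\medskip
\textbf{Part (3), purely inseparable case.} Your cofinality argument requires that composing with $\pi$ sends $\mr{Hen}(S')$ into $\mr{Hen}(S)$. By definition objects of $\mr{Hen}(S)$ are generically \'{e}tale over $S$, so this only works when $\pi$ itself is generically \'{e}tale. When $\eta'\to\eta$ has a nontrivial purely inseparable factor the functor does not exist, and the argument collapses. The paper handles this by factoring $\eta'\to\eta$ as a separable extension followed by a purely inseparable one; the separable part is treated by your cofinality argument, and the purely inseparable part is a universal homeomorphism, under which the six-functor formalism (hence $\Psi$) is insensitive. The same issue contaminates your treatment of (1) and (2), since you index by $\mr{Hen}(S')$ while $\Psi_\pi$ is defined as a colimit over $\mr{Hen}(S)$; the paper avoids this by proving (3) first and then reducing (1), (2) to $\pi=\mr{id}$.

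\medskip
\textbf{Part (1), stabilization.} The sentence ``once the monodromy is absorbed at level $S''_0$ further covers contribute nothing new'' is the assertion, not the proof. The paper's mechanism is: for a further cover $\alpha$ that is finite \'{e}tale on the complement of the divisor, the object $g^*\ms{F}$ is a direct summand of $\alpha_*\alpha^*g^*\ms{F}$ there, so the transition map
\[
\Psi^{\mr{un}}_h(g^*\ms{F})\longrightarrow \alpha_*\Psi^{\mr{un}}_{h\circ\alpha}(\alpha^*g^*\ms{F})
\]
is split injective. Since log-extendability is preserved under $\alpha^*$, both sides are smooth of the same rank (this is the computation from \cite[Lemma 2.4]{AC2} that you allude to for (2)), and a split injection between smooth objects of equal rank is an isomorphism. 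Without this split-injection-plus-rank argument the stabilization is unjustified.
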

\begin{proof}
 Let us check \ref{invbcphi}. Assume $f$ is generically \'{e}tale.
 Then $S'\in\mr{Hen}(S)$. Thus we have the functor
 $\mr{Hen}(S')\rightarrow\mr{Hen}(S)$ sending $S''\rightarrow S'$ to
 $S''\rightarrow S'\rightarrow S$. This functor is cofinal, and since
 $\pi_s$ is an isomorphism, we get the claim in this case.
 If $f$ is not generically \'{e}tale, the morphism
 $\eta'\rightarrow\eta$ breaks up into
 $\eta'\xrightarrow{a}\eta^{\mr{sep}}\xrightarrow{b}\eta$ where $a$ is
 purely inseparable and $b$ is \'{e}tale. By the generically \'{e}tale
 case we have already treated, it suffices to check the claim for the
 case where $f_\eta$ is purely inseparable. In this case, $f$ is
 universally homeomorphic, thus the functors do not see the difference
 between $S$ and $S'$ (cf.\ \cite[Lemma 1.1.3]{A}).

 Let us check the other two claims. By \ref{invbcphi}, we may assume
 that $\pi=\mr{id}$.
 Let $Y$ be a smooth scheme and $D$ be a smooth divisor with the generic
 point $\eta_D$ such that the strict henselization of $Y$ at $\eta_D$ is
 $S$. By Kedlaya's semistable reduction theorem
 \cite{K}, at the cost of shrinking $Y$ further, there exists a
 surjective morphism $g\colon Y'\rightarrow Y$ such that $Y'$ is smooth,
 $g$ is \'{e}tale outside of $D$, and the pull-back of $\ms{F}$ to $Y'$
 is log-extendable along the smooth irreducible divisor
 $D':=g^{-1}(D)$ with generic point $\eta_{D'}$.
 Replacing $Y$ by its \'{e}tale neighborhood around $\eta_D$, we may
 assume that $\eta_{D'}\rightarrow\eta_D$ is an isomorphism.
 Take any function $h\in\mc{O}_{Y'}$ such that the zero-locus is equal
 to $D'$. In this situation, the computation of \cite[Lemma
 2.4]{AC2} shows that $\Psi^{\mr{un}}_{h}(g^*\ms{F})$ is smooth
 object on $D'$ of rank equal to that of $\ms{F}$.
 Now, take a finite morphism $\alpha\colon Y''\rightarrow Y'$ which is
 \'{e}tale outside of $D'$. Then $g^*\ms{F}|_{Y'\setminus D'}$ is a direct
 factor of $\alpha_*\alpha^*g^*\ms{F}|_{Y'\setminus D'}$,
 so we get that the canonical homomorphism
 \begin{equation}
  \label{splinjcom}
  \Psi_h^{\mr{un}}(g^*\ms{F})\rightarrow
   \alpha_*\Psi_{h\circ\alpha}^{\mr{un}}(\alpha^*g^*\ms{F})
 \end{equation} 
 is a split injective homomorphism.
 Since $\alpha^*g^*\ms{F}$ is also log-extendable, the rank is the same
 as that of $\ms{F}$. This implies that the canonical map
 (\ref{splinjcom}) is an isomorphism. Thus,
 $\Psi^{\mr{un}}_{\mr{id}}(\ms{F})$ is the same as the pull-back of
 $g_*\Psi^{\mr{un}}_h(\ms{F})$.
\end{proof}

\section{Finiteness of nearby cycle}
\label{finthm}
Throughout this section, we fix a strict henselian trait $(S,s,\eta)$.

\subsection{}
First, we need a preparation.
Let $f\colon X\rightarrow Y$ be a morphism of finite type.
We put $f^i_*:=\H^if_*\colon\mr{Hol}(X)\rightarrow\mr{Hol}(Y)$.
This extends canonically to Ind-categories (cf.\ \cite[1.2]{A}), and
defines a functor $\mr{Ind}\mr{Hol}(X)\rightarrow\mr{Ind}\mr{Hol}(Y)$,
which we still denote by $f^i_*$.
If $f$ is smooth of codimension $d$, we have the exact functor
$f^*[d]\colon\mr{Hol}(Y)\rightarrow\mr{Hol}(X)$.
This functor also extends to Ind-categories, and is denoted by
$f^{\circledast}$.

\begin{lem*}
 \label{fundcommind}
 Let $S$ be a strict henselian trait, and $f\colon X\rightarrow Y$ be a
 morphism of $S$-schemes.
 \begin{enumerate}
  \item If $f$ is proper, then there exists a canonical isomorphism
	$\Psi\circ f^i_*\cong f^i_*\circ\Psi$ in $\mr{Ind}\mr{Hol}(Y)$
	for each $i$.
  \item If $f$ is smooth, then there exists a canonical isomorphism
	$f^{\circledast}\circ\Psi\cong\Psi\circ f^{\circledast}$
	in $\mr{Ind}\mr{Hol}(X)$.
 \end{enumerate}
\end{lem*}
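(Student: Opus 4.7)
The plan is to reduce each claim to the corresponding compatibility for the unipotent functor $\Psi^{\mathrm{un}}$, propagated through the defining formula $\Psi_f = \indlim_{S' \in \mathrm{Hen}(S)} h_{s,*}\circ\Psi^{\mathrm{un}}_{f'}\circ h^*$. For $h\colon S'\to S$ in $\mathrm{Hen}(S)$ I write $h_X\colon X'\to X$ and $h_Y\colon Y'\to Y$ for the base changes, and $f'\colon X'\to Y'$ for the induced morphism.

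For (1), I substitute the definition of $\Psi_Y$ into $\Psi_Y\circ f^i_*$ and process from the outside in. First, proper base change gives $h_Y^*\circ f_*\cong f'_*\circ h_X^*$ at the derived level, and taking cohomology yields $h_Y^*\circ f^i_*\cong f'^i_*\circ h_X^*$, using t-exactness of $h_Y^*$ (valid since $h$ is flat and generically \'{e}tale). Next, the key unipotent identity $\Psi^{\mathrm{un}}_{f'_Y}\circ f'^i_*\cong f'^i_{s,*}\circ\Psi^{\mathrm{un}}_{f'_X}$ is obtained by unwinding $\Psi^{\mathrm{un}}_{\pi}=\Pi^{0,0}_{!*}(1)$ and commuting $f'_*$ past $j_!$ and $j_*$ (proper base change along the diagrams defining the open/closed immersions on $X'$ and $Y'$), past the tensor product $\otimes\,\pi^*\slog^n$ (projection formula, since $\slog^n$ is pulled back from the base), and past the filtered $\indlim_n$ in the defining kernel; extracting $\mathcal{H}^i$ and invoking exactness of $\Psi^{\mathrm{un}}$ delivers the cohomological version. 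Finally, $h_{Y,s,*}\circ f'^i_{s,*}\cong f^i_{s,*}\circ h_{X,s,*}$ holds trivially because $h_{X,s}$ and $h_{Y,s}$ are isomorphisms ($S$ being strict henselian), and $f^i_*$ commutes with the outer filtered $\indlim_{S'}$ by construction of its Ind-extension.

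Part (2) follows the same template with $f^{\circledast}=f^*[d]$ in place of $f^i_*$: functoriality of pullback supplies $h_X^*\circ f^{\circledast}\cong f'^{\circledast}\circ h_Y^*$; the unipotent compatibility $f'^{\circledast}\circ\Psi^{\mathrm{un}}_{f'_Y}\cong\Psi^{\mathrm{un}}_{f'_X}\circ f'^{\circledast}$ follows from commutation of smooth pullback with $j_!$, $j_*$, tensor products, and ind-limits; the exchange with $h_{s,*}$ is trivial since $h_{s}$ is an isomorphism; and $f^{\circledast}$, being exact, extends to Ind-categories preserving filtered colimits.

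The only genuinely non-formal step is the unipotent compatibility in (1): one must commute the derived $f'_*$ past both the filtered $\indlim_n$ in the definition of $\Pi^{0,0}_{!*}$ and the kernel operation, then carefully extract $\mathcal{H}^i$. The smooth case (2) is strictly easier because $f^{\circledast}$ is exact and commutes with all the relevant operations on the nose.
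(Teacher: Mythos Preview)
Your proposal is correct and is precisely the ``exercise of six-functor formalism'' that the paper leaves to the reader; the paper's own proof consists of exactly that one sentence and nothing more. Your reduction to the unipotent functor via the defining colimit over $\mathrm{Hen}(S)$, together with proper base change, the projection formula, and the commutation of $f'_*$ (resp.\ $f'^{\circledast}$) with the building blocks $j_!$, $j_*$, $\otimes\,\slog^n_\pi$ of $\Pi^{0,i}_{!*}$, is the intended argument---the only small refinement is that rather than asserting t-exactness of $h_Y^*$ directly, you may prefer to use (as in the paper's \S1.8) that $\Psi^{\mathrm{un}}_{f'}\circ h^*\cong\Psi^{\mathrm{un}}_{f'}\circ j'_*h_\eta^*j^*$ depends only on the generic fiber, where $h_\eta$ is genuinely \'etale.
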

\begin{proof}
 These are exercise of six-functor formalism, so we leave the
 verification to the reader.
\end{proof}

\begin{thm}
 \label{finthm}
 If $\pi\colon X\rightarrow S$ is of finite type, the functors
 $\Psi_\pi$ and $\Phi_\pi$ define functors
 from $\mr{Hol}(X)$ to $\mr{Hol}(X_s)$.
\end{thm}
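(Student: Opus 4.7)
The assertion is that the Ind-objects
\[
\Psi_\pi(\ms{F})=\indlim_{S'\in\mr{Hen}(S)} h_{s,*}\circ\Psi^{\mr{un}}_{\pi'}\circ h^*(\ms{F}),\qquad \Phi_\pi(\ms{F})=\indlim_{S'\in\mr{Hen}(S)} h_{s,*}\circ\Phi^{\mr{un}}_{\pi'}\circ h^*(\ms{F})
\]
are representable in $\mr{Hol}(X_s)$; conceptually this expresses that after a finite \'etale base change $S'\to S$, the monodromy of $\ms{F}$ along $X_s$ becomes unipotent, so that the full nearby cycle is captured by the unipotent one. I first reduce $\Phi_\pi$ to $\Psi_\pi$: extending (\ref{fundexunip}) to the Ind limit gives a triangle $i^*[-1]\to\Psi_\pi\to\Phi_\pi\xrightarrow{+1}$ in $D(\mr{Ind}\mr{Hol}(X_s))$, and the associated long exact sequence
\[
0\to \H^{-1}i^*\ms{F}\to\Psi_\pi(\ms{F})\to\Phi_\pi(\ms{F})\to \H^0 i^*\ms{F}\to 0
\]
has its outer terms automatically in $\mr{Hol}(X_s)$, so representability of $\Psi_\pi(\ms{F})$ forces that of $\Phi_\pi(\ms{F})$.

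For $\Psi_\pi$ I carry out a d\'evissage to the zero-dimensional case of Proposition \ref{dim0case}, mediated by Kedlaya's semistable reduction and the commutation properties recorded in Lemma \ref{fundcommind}. Since $\mr{Hol}(X)$ is noetherian and artinian (Lemma \ref{noearhol}), I may assume $\ms{F}$ irreducible. Kedlaya's theorem, exactly as in the proof of Proposition \ref{dim0case}, yields a proper generically \'etale surjection $g\colon X'\to X$ with $X'$ smooth and with $g^*\ms{F}$ log-extendable along the preimage of $X_s$, which can be arranged to be a strict normal crossings divisor. On the generic fibre $\ms{F}$ is a direct summand of $g_*g^*\ms{F}$; since $\Psi_\pi$ depends only on the generic fibre and commutes with $g_*$ by Lemma \ref{fundcommind}(1), it suffices to prove representability of $\Psi_{\pi\circ g}(g^*\ms{F})$ on this nice model.

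In the log-extendable situation, representability can be checked \'etale-locally on $X'_s$ via Lemma \ref{fundcommind}(2). Around a smooth point of an irreducible component of $X'_s$ the map $\pi\circ g$ is \'etale-locally a product $Z\times T\to T$ with $T$ a henselian trait and $Z$ smooth, and log-extendability reduces the nearby cycle of $g^*\ms{F}$ to the pullback from $T$ of a nearby cycle on $T$, which is representable by Proposition \ref{dim0case}. Strata of higher codimension in the normal crossings divisor are handled by induction on the depth of the stratum together with a further application of smooth pullback.

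The main obstacle is this last step: one must exhibit a single finite tamely ramified cover $S'\to S$ that stabilises the Ind limit \emph{uniformly} in the horizontal directions on $X'_s$. The irreducibility of $\ms{F}$ and the boundedness of its generic rank restrict the inertia characters that can appear in $g^*\ms{F}$, so a common cover should exist; but turning this into a precise argument---in particular extracting the semisimple parts of the log residues of $g^*\ms{F}$ globally on $X'_s$ and matching them to a single object of $\mr{Hen}(S)$---is the technical core of the proof and is where the bulk of the work will be.
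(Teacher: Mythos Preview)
Your reduction of $\Phi_\pi$ to $\Psi_\pi$ via the fundamental triangle matches the paper; after that the approaches diverge. The paper follows Deligne \cite[Th\'eor\`eme 3.2]{DF} and inducts on $\dim(X_\eta)$, Proposition \ref{dim0case} being the base. For the inductive step one embeds $X\hookrightarrow\mb{A}^n_S$, projects to each coordinate $\mb{A}^1_S$, and strictly henselizes at the generic point of $\mb{A}^1_s$: over this new trait the fibre dimension has dropped, so by induction (combined with the $P$-invariants identity of \ref{difftrcom}) the pullback of $\Psi_\pi(\ms{F})$ is holonomic at the generic point of each coordinate hyperplane of $\mb{A}^n_s$. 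Lemma \ref{finprojle} then produces $\ms{G}\in\mr{Hol}(X_s)$ inside $\Psi_\pi(\ms{F})$ with quotient a sum of skyscrapers, and a compactification argument using $\pi_*\circ\Psi\cong\Psi\circ\pi_*$ together with Proposition \ref{dim0case} for $\mr{id}_S$ bounds the skyscraper part. At no point is a single $S'\in\mr{Hen}(S)$ stabilizing the Ind limit produced.

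Your plan, by contrast, tries to make the Ind limit stabilize after one finite cover of $S$, and you correctly flag that producing this uniform cover is where ``the bulk of the work will be''---but you give no mechanism for it, and in your framing it is the whole theorem, not residual bookkeeping. There are also earlier soft spots. Kedlaya's theorem concerns overconvergent isocrystals, so ``$g^*\ms{F}$ log-extendable along $g^{-1}(X_s)$'' only makes sense on the open locus where $\ms{F}$ is already smooth; this is not ``exactly as in Proposition \ref{dim0case}'', where $\ms{F}$ lives on the point $\eta'$ and is automatically smooth. And the claim that near a smooth point of $X'_s$ ``log-extendability reduces the nearby cycle of $g^*\ms{F}$ to the pullback from $T$ of a nearby cycle on $T$'' is not what Lemma \ref{fundcommind}(2) says: that lemma pulls back $\Psi$ of an object living \emph{on} $T$, whereas $g^*\ms{F}$ is not pulled back from $T$. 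What log-extendability actually buys you, via \cite[Lemma 2.4]{AC2}, is a computation of $\Psi^{\mr{un}}$ along the smooth part of $X'_s$; identifying this with the full $\Psi$ is again exactly the uniform-cover problem you have left open.
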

By the fundamental exact triangle
$\Psi_\pi^{\mr{un}}\rightarrow\Phi_\pi^{\mr{un}}\rightarrow
i^*\xrightarrow{+}$, where $i\colon X_s\rightarrow X$,
it suffices to check the theorem just for
$\Psi_\pi$. The idea of the proof is essentially the same as
\cite[Th\'{e}or\`{e}me 3.2]{DF}.
The proof is divided into several parts.
We prove the theorem by induction on the dimension of $X_\eta$.
The case where $\dim(X_\eta)=0$ has already been treated in Proposition
\ref{dim0case}.
We assume that the theorem holds for $X$ such that $\dim(X_\eta)<n$.
From now on, we assume that $\dim(X_\eta)=n$.

\begin{lem}[{\cite[Lemme 3.5]{DF}}]
 \label{finprojle}
 Let $K$ be a field containing $k$.
 Let $X\subset \mb{A}^n_K$ be a closed subscheme,
 $\ms{F}\in\mr{Ind}\mr{Hol}(X)$, and $\overline{\eta}$ be a
 geometric generic
 point of $\mb{A}^1_K$. Let $X_{\overline{\eta},i}$ be the geometric
 generic fiber of the morphism
 $X\subset\mb{A}^n\xrightarrow{\mr{pr}_i}\mb{A}^1$, where $\mr{pr}_i$
 denotes the $i$-th projection, and $\ms{F}_{\overline{\eta},i}$ denotes
 the pull-back of $\ms{F}$ to $X_{\overline{\eta},i}$. Assume
 $\ms{F}_{\overline{\eta},i}\in\mr{Hol}(X_{\overline{\eta},i})$ for all
 $i$. Then there exists $\ms{F}'\in\mr{Hol}(X)$ contained in $\ms{F}$
 such that the local sections of $\ms{F}/\ms{F}'$ are supported on
 finitely many points, namely it is isomorphic to
 $\bigoplus_{x\in|\mb{A}^n|}\ms{G}_x$ where $\ms{G}_x$ is supported on
 $x$.
\end{lem}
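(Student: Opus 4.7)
The plan is to produce $\ms{F}'$ by enlarging a holonomic subobject of $\ms{F}$ until it matches $\ms{F}$ generically along each coordinate projection, and then to analyze the quotient. Write $\ms{F}=\indlim_{\alpha}\ms{F}_{\alpha}$ as a filtered colimit of its holonomic subobjects $\ms{F}_{\alpha}\in\mr{Hol}(X)$. Pull-back to the geometric generic fiber $X_{\overline{\eta},i}$ commutes with filtered colimits, and $\ms{F}_{\overline{\eta},i}\in\mr{Hol}(X_{\overline{\eta},i})$ is of finite length by Lemma \ref{noearhol}. Hence, for each fixed $i$ the ascending chain $\{(\ms{F}_{\alpha})_{\overline{\eta},i}\}_{\alpha}$ of subobjects of $\ms{F}_{\overline{\eta},i}$ stabilizes. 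Since there are only $n$ indices and the system is filtered, I can pick a single $\alpha_{0}$ with $(\ms{F}_{\alpha_{0}})_{\overline{\eta},i}=\ms{F}_{\overline{\eta},i}$ for every $i=1,\dots,n$. Set $\ms{F}':=\ms{F}_{\alpha_{0}}$.

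With this choice, $\ms{H}:=\ms{F}/\ms{F}'\in\mr{Ind}\mr{Hol}(X)$ satisfies $\ms{H}_{\overline{\eta},i}=0$ for every $i$. Writing $\ms{H}=\indlim_{\beta}\ms{H}_{\beta}$ with $\ms{H}_{\beta}\in\mr{Hol}(X)$, each $\ms{H}_{\beta}$ has $(\ms{H}_{\beta})_{\overline{\eta},i}=0$, so the image $\mr{pr}_{i}(\mr{Supp}(\ms{H}_{\beta}))\subset\mb{A}^{1}_{K}$ misses the generic point and is therefore a finite set of closed points $T_{\beta,i}$. Consequently $\mr{Supp}(\ms{H}_{\beta})\subset\bigcap_{i=1}^{n}\mr{pr}_{i}^{-1}(T_{\beta,i})$ is a finite set of closed points of $\mb{A}^{n}_{K}$.

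It remains to assemble $\ms{H}$ into the direct sum $\bigoplus_{x}\ms{G}_{x}$. For each holonomic $\ms{H}_{\beta}$ with support $\{x_{1},\dots,x_{m}\}$, disjointness of these closed points yields a canonical decomposition $\ms{H}_{\beta}=\bigoplus_{j}(\ms{H}_{\beta})_{x_{j}}$ with $(\ms{H}_{\beta})_{x_{j}}$ supported on $\{x_{j}\}$. This rests on the vanishing $\mr{Ext}^{1}_{\mr{Hol}(X)}(i_{x,*}A,i_{y,*}B)=0$ for distinct closed points $x\neq y$, which follows from $i_{x}^{!}i_{y,*}=0$. These decompositions are functorial, so setting $\ms{G}_{x}:=\indlim_{\beta}(\ms{H}_{\beta})_{x}$ and taking the colimit produces $\ms{H}\cong\bigoplus_{x\in|\mb{A}^{n}|}\ms{G}_{x}$ in $\mr{Ind}\mr{Hol}(X)$.

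The main obstacle is this last step, where one must verify that the ``pointwise'' decompositions of the $\ms{H}_{\beta}$ are compatible with the transition maps and with the passage to the filtered colimit, especially since the finite support set varies with $\beta$. The key input is that $\mr{Hom}_{\mr{Hol}(X)}(i_{x,*}A,i_{y,*}B)=0$ for distinct closed points $x\neq y$ (by the same $i_{x}^{!}i_{y,*}=0$), which forces each transition $\ms{H}_{\beta}\to\ms{H}_{\beta'}$ to preserve the $x$-components. Once this naturality is checked, commuting $\indlim$ past a (locally finite) direct sum is routine.
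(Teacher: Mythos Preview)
Your proof is correct, and the analysis of the quotient $\ms{H}=\ms{F}/\ms{F}'$ is essentially the same as the paper's: both show that each holonomic subobject of $\ms{H}$ has support missing the generic fiber of every $\mr{pr}_i$ and hence lies over finitely many closed points of $\mb{A}^n_K$, and then assemble the pointwise pieces into a direct sum. Where you differ is in the construction of $\ms{F}'$. The paper spreads the holonomic object $\ms{F}_{\overline{\eta},i}$ out to a holonomic $\ms{H}_i$ on an \'etale neighborhood $X_{U,i}$ of the geometric generic fiber, extends by $j_{i!}$, and sets $\ms{F}'=\sum_i\mr{Im}(j_{i!}\ms{H}_i\to\ms{F})$. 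You instead exploit noetherianness directly: writing $\ms{F}=\indlim_\alpha\ms{F}_\alpha$ over its holonomic subobjects and using finite length of $\ms{F}_{\overline{\eta},i}$ (Lemma~\ref{noearhol}) to find a single $\ms{F}_{\alpha_0}$ that already equals $\ms{F}$ on every geometric generic fiber. Your route is shorter and avoids the spreading-out step and the auxiliary functor $j_{i!}$; the paper's route is more explicit about where $\ms{F}'$ comes from and mirrors Deligne's original argument in \cite{DF}. Both approaches use the same two key inputs: exactness of pull-back to the generic fiber and the fact (invoked by the paper as well) that an Ind-object in a noetherian abelian category is the filtered union of its noetherian subobjects.
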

\begin{proof}
 Let $\pi\colon\mb{A}^n\rightarrow\mb{A}^1$ be a projection. For each
 $x\in\mb{A}^1$, denote by
 $i_x\colon\mb{A}_x:=\pi^{-1}(x)\hookrightarrow\mb{A}^n$ and by $\eta$
 the generic point of $\mb{A}^1$. Assume given
 $\ms{G}\in\mr{Ind}\mr{Hol}(\mb{A}^n)$ such that the pull-back to
 $\mb{A}_\eta$ is $0$. Then
 \begin{equation*}
  \ms{G}\cong\bigoplus_{x\in|\mb{A}^1|}i_{x,*}i^*_x\ms{G}.
 \end{equation*}
 Indeed, since $\mr{Hol}(\mb{A}^n)$ is a noetherian category by Lemma
 \ref{noearhol}, we may write $\ms{G}\cong\indlim_i\ms{G}_i$ where
 $\ms{G}_i\hookrightarrow\ms{G}$ and $\ms{G}_i\in\mr{Hol}(\mb{A}^n)$.
 Since the pull-back by $\mb{A}_\eta\rightarrow\mb{A}^n$ is exact, by
 assumption, $\ms{G}_i$ becomes $0$ on $\mb{A}_\eta$. This implies that
 $\ms{G}_i$ is supported on $\coprod_{i=1}^n\mb{A}_{x_i}$ where $x_i$
 are closed points of $\mb{A}^1$, and the claim follows.

 Fix $i$.
 There exists an \'{e}tale neighborhood $U$ of $\overline{\eta}$ and
 $\ms{H}_i\in\mr{Hol}(X_{U,i})$, where
 $j_{i}\colon X_{U,i}:=X\times_{\mr{pr}_i,\mb{A}^1}U\hookrightarrow X$,
 such that its pull-back to $X_{\overline{\eta},i}$ is
 $\ms{F}_{\overline{\eta},i}$. By shrinking $U$ if necessarily, we may
 assume that the isomorphism
 $\ms{H}_{i,\overline{\eta},i}\xrightarrow{\sim}
 \ms{F}_{\overline{\eta},i}$ is induced by a homomorphism
 $\ms{H}_i\rightarrow\ms{F}_{U,i}$ where $\ms{F}_{U,i}$ denotes the
 pull-back of $\ms{F}$ to $X_{U,i}$. Now, put
 \begin{equation*}
  \ms{F}':=\sum_{i=1}^n\mr{Im}\bigl(j_{i!}\ms{H}_i\rightarrow\ms{F}\bigr)
   \subset\ms{F}.
 \end{equation*}
 By construction, $\ms{F}'\in\mr{Hol}(X)$ and
 $(\ms{F}/\ms{F}')_{\overline{\eta},i}=0$ for any $n$.
 Thus, using the observation above, any local section of
 $\ms{F}/\ms{F}'$ is supported on finitely many points.
\end{proof}

\subsection{}
\label{difftrcom}
Let $s'$ be the generic point of $\mb{A}^1_s$, and let $(S',s',\eta')$
be the strict henselization of $\mb{A}_S^1$ at the generic point $s'$ of
the divisor $\mb{A}^1_s$. Let $h\colon S'\rightarrow S$ the morphism.
Deligne constructed the following diagram in
the proof of \cite[Lemme 3.3]{DF}:
\begin{equation*}
 \xymatrix{
  \overline{\eta}'\ar[r]^-{P}\ar[rd]&
  \mr{Spec}(k')\ar[r]^-{G}\ar[d]&\eta'\ar[d]\\
 &\overline{\eta}\ar[r]^-{G}&\eta}
\end{equation*}
where horizontal maps are algebraic extensions of fields, and $P$, $G$
denote the Galois groups of the extension. The group $P$ is a
pro-$p$-group. Now, let $\pi'\colon X'\rightarrow S'$ be a morphism, and
let $\ms{F}\in\mr{Hol}(X'_{\eta'})$. Note that
$X'_{\eta}=X'_{\eta'}$ and $X'_s=X'_{s'}$.
Then we have $\Psi_{h\circ\pi'}(\ms{F})\cong\Psi_{\pi'}(\ms{F})^P$.

\subsection{Proof of Theorem \ref{finthm}}\mbox{}\\
Now, let $\pi\colon X\rightarrow S$ be a morphism of finite type.
We first assume $X$ affine, and take a closed immersion
$X\hookrightarrow\mb{A}^n_S$, and let $f\colon
X\subset\mb{A}^n_S\rightarrow\mb{A}^1_S$ where the second morphism is a
projection.
Recall from the previous paragraph that $\lambda\colon
S'\hookrightarrow\mb{A}^1_S$ is a strict henselization of $\mb{A}^1_S$.
Consider the following diagram:
\begin{equation*}
 \xymatrix@C=40pt{
  X'\ar[r]^-{f'}\ar[d]_{\lambda_X}\ar@{}[rd]|\square&
  S'\ar[d]^{\lambda}&\\
 X\ar[r]^-{f}\ar@/_15pt/[rr]_-{\pi}&\mb{A}^1_S\ar[r]&S.
  }
\end{equation*}
Let $\ms{F}'$ be the pull-back of $\ms{F}$ to $X'$. Then we have
\begin{equation*}
 \lambda^{\circledast}_X\Psi_{\pi}(\ms{F})\cong\Psi_{\pi\circ\lambda_X}(\ms{F})
  \cong\Psi_{f'}(\ms{F})^P,
\end{equation*}
where the first isomorphism follows by Lemma \ref{fundcommind},
and the second by \ref{difftrcom}. Now, the induction hypothesis tells us
that $\Psi_{f'}(\ms{F})^P\in\mr{Hol}(X'_{s'})$, and by construction
$X'_{s'}=X'_s$. This implies that
$\lambda^{\circledast}_X\Psi_{\pi}(\ms{F})\in\mr{Hol}(X'_{s})$.
Lemma \ref{finprojle} ensures the existence of $\ms{G}\in\mr{Hol}(X_s)$
contained in $\Psi_\pi(\ms{F})$ such that the local sections of
$\Psi_\pi(\ms{F})/\ms{G}$ are supported on finitely many points.
Now, if $X$ is not affine, we take a finite affine open covering
$\{U_i\}$ and we can get such $\ms{G}_i$ for each subscheme. Then
$\ms{G}:=\mr{Im}\bigl(\bigoplus\H^0\ms{G}_{i,!}\rightarrow
\Psi_\pi(\ms{F})\bigr)$, where $\ms{G}_{i,!}$ denotes the extension by
zero of $\ms{G}_i$ to $X$, is in $\mr{Hol}(X_s)$ and the local sections
of $\Psi_\pi(\ms{F})/\ms{G}$ are supported on finitely many points.

In order to show the finiteness of $\Psi_\pi(\ms{F})$, we may assume $X$
is proper over $S$. Take $\ms{G}$ as above, and we may write
$\Psi_\pi(\ms{F})/\ms{G}\cong\bigoplus_{x\in|X_s|}\ms{G}_x$ where
$\ms{G}_x$ is supported on $x$. We have the following long exact
sequence:
\begin{equation*}
 \xymatrix@R=10pt{
  \dots\ar[r]&
  \pi^i_{s*}\bigl(\Psi_\pi(\ms{F})\bigr)\ar[r]\ar@{-}[d]^{\sim}&
  \pi^i_{s*}\bigl(\bigoplus_x\ms{G}_x\bigr)\ar[r]&
  \uwave{\pi^{i+1}_{s*}(\ms{G})}\ar[r]&\dots
  \\
 &\uwave{\Psi_{\mr{id}}(\pi^i_*\ms{F})}&&&&
  }
\end{equation*}
where the vertical isomorphism follows by Lemma
\ref{fundcommind}. Since we already know that the objects with
$\uwave{(-)}$ are in $\mr{Hol}(s)$, we have
$\bigoplus_x\pi_{s*}\ms{G}_x\cong
\pi_{s*}\bigl(\bigoplus_x\ms{G}_x\bigr)\in\mr{Hol}(s)$.
This implies that $\bigoplus_{x\in|X_s|}\ms{G}_x\in\mr{Hol}(X_s)$, and
thus $\Psi_\pi(\ms{F})\in\mr{Hol}(X_s)$ as required.
\qed

\begin{cor}
 \label{fundcomm}
 Let $\pi\colon X\rightarrow S$ be a morphism of finite type, and
 $f\colon X\rightarrow Y$ be a morphism of $S$-schemes of finite type.
 \begin{enumerate}
  \item\label{fundcommpr}
       If $f$ is proper, then there exists a canonical isomorphism
       $\Psi\circ f_*\cong f_*\circ\Psi$ in $D(Y)$.
  \item\label{fundcommsm}
       If $f$ is smooth, then there exists a canonical isomorphism
       $f^*\circ\Psi\cong\Psi\circ f^*$ in $D(X)$.
  \item\label{fundextr}
       We have the exact triangle of functors
       $i^*[-1]\rightarrow\Psi_\pi\rightarrow\Phi_\pi\xrightarrow{+}$.
 \end{enumerate}
\end{cor}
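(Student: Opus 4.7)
The plan is to deduce all three assertions from Lemma~\ref{fundcommind}, Theorem~\ref{finthm}, and the unipotent fundamental triangle~(\ref{fundexunip}), by passing through the construction of $\Psi_\pi$, $\Phi_\pi$ as $\indlim_{S'\in\mr{Hen}(S)} h_{s,*}\circ(-)\circ h^*$ of their unipotent counterparts.

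For part (\ref{fundextr}) I would start from the exact triangle~(\ref{fundexunip}) applied to $f'\colon X\times_S S'\to S'$ for each $h\colon S'\to S$ in $\mr{Hen}(S)$. Applying $h^*$ and $h_{s,*}$ yields, in $D(X_s)$, an exact triangle whose first term is $h_{s,*}\,i'^*\,h^*[-1]$. Because $h_s$ is an isomorphism and $h^*\cong j'_*h_\eta^*j^*$ is concentrated in a single degree (exactly as used just before Proposition~\ref{dim0case}), this first term is canonically $i^*[-1]$, independent of $S'$, while the other two terms take $\indlim$ over $\mr{Hen}(S)$ to $\Psi_\pi$ and $\Phi_\pi$ by definition. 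This produces the desired triangle in $\mr{Ind}\mr{Hol}(X_s)$; Theorem~\ref{finthm} then places $\Psi_\pi(\ms{F})$ and $\Phi_\pi(\ms{F})$ in $\mr{Hol}(X_s)$ for every $\ms{F}\in\mr{Hol}(X)$, so the triangle lives in $D(X_s)$ as claimed.

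For parts (\ref{fundcommpr}) and (\ref{fundcommsm}), Lemma~\ref{fundcommind} already gives, for each $i$, the commutation of $\Psi$ with $f^i_*$ (resp.\ with $f^{\circledast}$) in $\mr{Ind}\mr{Hol}$. Theorem~\ref{finthm} upgrades these Ind-objects to objects of $\mr{Hol}$. Because $\Psi$ and $\Phi$ are t-exact on $\mr{Hol}$ (they are built from exact unipotent functors composed with $h_{s,*}$ and $h^*$ on traits), they extend canonically to the derived categories. A natural transformation $\Psi\circ f_*\to f_*\circ\Psi$ (resp.\ $f^*\circ\Psi\to\Psi\circ f^*$) can be produced from the proper (resp.\ smooth) base change morphisms applied termwise to the ind-presentation. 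On passing to $\H^i$ this natural transformation recovers the isomorphism of Lemma~\ref{fundcommind}, so conservativity of the family $\{\H^i\}$ on bounded complexes yields the desired isomorphism in $D(Y)$ (resp.\ $D(X)$).

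The main obstacle I anticipate is writing down the canonical base-change morphism of derived functors cleanly enough that Lemma~\ref{fundcommind} can be read as the assertion that its cohomology is an isomorphism; the interplay of the $\mr{Ind}$-categorical framework, the inductive limit over $\mr{Hen}(S)$, and the six-functor formalism of \cite{A} must be arranged coherently. Once that morphism is in place, parts (\ref{fundcommpr}) and (\ref{fundcommsm}) reduce to the degree-wise statement of the lemma combined with the finiteness of Theorem~\ref{finthm}, and part (\ref{fundextr}) is essentially a direct pass to the limit.
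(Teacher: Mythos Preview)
The paper states this corollary without proof, leaving it as an immediate consequence of Lemma~\ref{fundcommind} and Theorem~\ref{finthm}; your proposal correctly spells out exactly that implicit argument. One minor correction: for the first term in part~(\ref{fundextr}) you do not need the identification $h^*\cong j'_*h_\eta^*j^*$ (that was used in \S1.7 only to see that $\Psi^{\mr{un}}_{f'}\circ h^*$ lands in a single degree); the simpler observation that $i'^*h^*\cong h_s^*i^*$ by functoriality of pullback along the cartesian square, together with $h_s$ being an isomorphism, already gives $h_{s,*}i'^*h^*\cong i^*$ directly, so the first term of the triangle is constant along $\mr{Hen}(S)$ as you claim.
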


\begin{rem*}
 Let $R\psi$ and $R\phi$ be the nearby and vanishing cycle functor for
 $\ell$-adic sheaves. The exact triangle for nearby/vanishing cycle
 functor usually goes
 $i^*\rightarrow R\psi\rightarrow R\phi\xrightarrow{+}$.
 This difference arises because in the spirit of Riemann-Hilbert
 correspondence, $\Psi=R\psi[-1]$, $\Phi=R\phi[-1]$.
 We could have employed this normalization, but in order to be
 consistent with \cite{AC2}, we decided not to take the shift.
\end{rem*}

\section{Smooth objects}
\label{smsec}
\subsection{}
Let $X$ be a scheme of finite type over $k$.
Then $D(X)$ is equipped with two t-structures; the holonomic t-structure
whose heart is $\mr{Hol}(X)$, and the constructible t-structure defined
in \cite[\S1.3]{A}. The heart of constructible t-structure is
denoted by $\mr{Con}(X)$.
Given a morphism of finite type $f\colon X\rightarrow Y$,
the pull-back $f^*$ is exact with respect to constructible t-structure
by \cite[1.3.4]{A}. Thus constructible t-structure extends to a
t-structure on $D(X)$ for any scheme $X$.
The cohomology functor for holonomic t-structure is denoted
by $\H^*$, as we have already used several times,
and the constructible t-structure by $\cH^*$.

\begin{dfn}
 Let $X$ be a scheme. Then $\ms{F}\in\mr{Con}(X)$
 is said to be {\em smooth} if for any morphism $\phi\colon S\rightarrow
 X$ from a strict henselian trait, $\Phi_{\mr{id}}(\phi^*\ms{F})=0$.
 The full subcategory of smooth objects in $\mr{Con}(X)$ is denoted by
 $\mr{Sm}(X)$.
\end{dfn}

By Theorem \ref{isocandsm} below, we can see that this definition is in
fact a generalization of smoothness defined in \cite[1.3.1]{A}.
To be more precise, when $X$ is a realizable scheme over $k$ such that
$X_{\mr{red}}$ is smooth, $\mr{Sm}(X)$ is the same as the category
introduced in \cite[1.1.3 (12)]{A}.

\begin{lem}
 \label{smpullok}
 Let $f\colon Y\rightarrow X$ be a proper surjective morphism, and
 $\ms{F}\in\mr{Con}(X)$. If $f^*\ms{F}$ is smooth, then $\ms{F}$ is
 smooth.
\end{lem}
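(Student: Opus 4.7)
The plan is to verify, for any morphism $\phi\colon S\rightarrow X$ from a strict henselian trait $(S,s,\eta)$, that $\Phi_{\mr{id}}(\phi^*\ms{F})=0$, using the assumed smoothness of $f^*\ms{F}$. The strategy is to lift $\phi$ through $f$ after a finite dominant change of base and then descend the vanishing via Proposition \ref{dim0case}(\ref{invbcphi}).

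First I construct a finite dominant morphism $\pi\colon S'\rightarrow S$ from a strict henselian trait $S'$, together with a morphism $g\colon S'\rightarrow Y$ such that $f\circ g=\phi\circ\pi$. Form $Y_S:=Y\times_X S\rightarrow S$; it is proper and surjective, so $(Y_S)_\eta$ is a non-empty scheme of finite type over $k(\eta)$ and admits a closed point $y$. Let $T\subset Y_S$ be the reduced closure of $y$ and let $Z\rightarrow T$ be the normalization. Then $Z$ is a normal integral one-dimensional scheme, proper and generically finite over $S$; upper semi-continuity of fibre dimension forces $Z\rightarrow S$ to be quasi-finite, so Zariski's main theorem makes it finite surjective. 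As $S$ is henselian and $Z$ is integral and finite over $S$, $Z=\mr{Spec}(R')$ where $R'$ is a local finite $R$-algebra that is normal of dimension one, hence a henselian DVR; its residue field is a finite purely inseparable extension of the separably closed field $k(s)$, and is therefore itself separably closed. Thus $S':=Z$ is a strict henselian trait, and $g\colon S'\hookrightarrow Y_S\rightarrow Y$ gives the required lift.

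Next I apply Proposition \ref{dim0case}(\ref{invbcphi}) to the dominant finite type morphism $\pi$: combining $\pi_s^*\circ\Psi_{\mr{id}}\cong\Psi_{\mr{id}}\circ\pi^*$ with the fundamental triangle $i^*[-1]\rightarrow\Psi_{\mr{id}}\rightarrow\Phi_{\mr{id}}\xrightarrow{+}$ of Corollary \ref{fundcomm}(\ref{fundextr}) and the evident identity $\pi_s^*\circ i^*\cong i'^*\circ\pi^*$, I deduce a canonical isomorphism $\pi_s^*\Phi_{\mr{id}}(\phi^*\ms{F})\cong\Phi_{\mr{id}}(\pi^*\phi^*\ms{F})$. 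Because $\pi^*\phi^*\ms{F}=g^*(f^*\ms{F})$ and $f^*\ms{F}$ is smooth, the right-hand side vanishes. Finally, $\pi_s\colon s'\rightarrow s$ is a morphism between spectra of fields, and the associated pullback is exact and faithful, hence conservative; we conclude $\Phi_{\mr{id}}(\phi^*\ms{F})=0$, as required.

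The only delicate step is the construction of the lift in the first paragraph: it rests on the valuative criterion of properness applied to $f$, plus the classical fact that a finite purely inseparable extension of a separably closed field is itself separably closed. Everything after that is a formal manipulation of the fundamental triangle and the commutation in Proposition \ref{dim0case}(\ref{invbcphi}).
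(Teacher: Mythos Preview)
Your proof is correct and follows essentially the same route as the paper: both lift $\phi$ through $f$ to a morphism $g\colon S'\to Y$ from a strict henselian trait $S'$ finite and dominant over $S$, then descend the vanishing of $\Phi_{\mr{id}}$ using Proposition~\ref{dim0case}(\ref{invbcphi}) together with the fundamental triangle. The paper produces the lift in one stroke via the valuative criterion applied to the integral closure $R'$ of $\mc{O}_S$ in a finite extension $\eta'$ of $\eta$, whereas you obtain the same $S'=\mr{Spec}(R')$ by taking the closure of a closed point of $(Y_S)_\eta$ and normalizing; the two constructions yield the same object.

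One small slip: the phrase ``upper semi-continuity of fibre dimension forces $Z\to S$ to be quasi-finite'' is not the right justification, since upper semi-continuity says fibre dimension can only \emph{increase} under specialization in the base, which is the wrong direction here. What actually makes $Z$ one-dimensional is the dimension formula for integral schemes of finite type over a universally catenary base (EGA~IV, 5.6.5): since $\mr{trdeg}(k(Z)/k(\eta))=0$ and $\dim S=1$, every closed point of $Z$ has local ring of dimension~$1$. Once $\dim Z=1$ is known, $Z_s$ is a proper closed subset of an irreducible one-dimensional scheme and hence finite, giving quasi-finiteness directly; the rest of your argument then goes through.
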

\begin{proof}
 Let $f\colon S'\rightarrow S$ be a finite morphism between strict
 henselian traits.
 In this case, we have an isomorphism
 $f_s^*\Psi_{\mr{id}}(\ms{F})\cong\Psi_{\mr{id}}(f^*\ms{F})$ by
 Proposition \ref{dim0case}.\ref{invbcphi}. Since $f_s^*$ is
 an isomorphism, the claim follows.
 Consider the general case. Given a morphism $\phi\colon S\rightarrow X$
 from a strict henselian trait, the fiber $Y\times_X\eta$ is non-empty
 since $f$ is assumed surjective.
 There exists a finite extension $\eta'$ of
 $\eta$ and a morphism $\eta'\rightarrow Y$ compatible with
 $\eta\rightarrow X$, thus, by valuative criterion of properness, we
 have a commutative diagram
 \begin{equation*}
  \xymatrix{
   S'\ar[r]^-{\phi'}\ar[d]_{f'}&Y\ar[d]^{f}\\
  S\ar[r]^-{\phi}&X,}
 \end{equation*}
 where $S'$ is a strict henselian trait, and $f'$ is dominant.
 By the finite morphism case we have already treated, it suffices to
 check that $f'^*\phi^*\ms{F}$ is smooth, which follows by assumption.
\end{proof}

\subsection{}
Let us recall briefly some basics of the theory of descent.
Let $\Delta$ be the category of three objects $[0],[1],[2]$ where
$[i]=\{0,\dots,i\}$. The morphism $[i]\rightarrow[j]$ is a
non-decreasing map. We denote by $\delta^n_j\colon[n-1]\rightarrow[n]$
the map skipping $j$ and $\sigma^n_j\colon[n+1]\rightarrow[n]$ be the
map such that $(\sigma^{n}_j)^{-1}(j)=\{j,j+1\}$.
A {\em simplicial scheme} $X_\bullet$ is a contravariant functor
$\Delta^{\circ}\rightarrow\mr{Sch}(k)$.
Usually, this type of simplicial scheme is called $2$-truncated
simplicial scheme, but since we only use these, we abbreviate the word
``$2$-truncated''. We put $X_i:=X_{\bullet}([i])$, and
$d^n_j:=X_{\bullet}(\delta^n_j)$ and $s^n_j:=X_{\bullet}(\sigma^n_j)$.
The category of descent data for $X_\bullet$ denoted by
$\mr{Con}(X_\bullet)$ consists of the
following data as objects:
\begin{itemize}
 \item an object $\ms{F}\in\mr{Con}(X_0)$;
 \item an isomorphism $\phi\colon
       d^{1*}_0\ms{F}\xrightarrow{\sim}d^{1*}_1\ms{F}$;
\end{itemize}
which satisfies the cocycle condition
$d^{2*}_2(\phi)\circ d^{2*}_0(\phi)=d^{2*}_1(\phi)$ on $X_2$ and
$s^{0*}_0\phi=\mr{id}$ on $X_0$.
Given an augmentation $f\colon X_\bullet\rightarrow X$, namely a
morphism of simplicial schemes considering $X$ as the constant
simplicial scheme, we say that $\mr{Con}(X)$ {\em satisfies the descent}
with respect to $f$ if the canonical functor
$\mr{Con}(X)\rightarrow\mr{Con}(X_\bullet)$ is an equivalence of
categories.
An augmentation $X_{\bullet}\rightarrow X$ is a
{\em proper hypercovering} if the canonical morphisms $X_0\rightarrow
X$, $X_0\times_XX_0\rightarrow X_1$, and $X_2\rightarrow
\mr{cosk}_1\mr{sk}_1(X_{\bullet})_2$ are proper surjective.
For the functors $\mr{sk}_1$ and $\mr{cosk}_1$, one can refer, for
example, to \cite[Tag 0AMA]{Stack}.

\begin{lem}
 \label{descone}
 Let $f\colon Y\rightarrow X$ be a proper surjective morphism. Then
 the hypercovering
 $Y_\bullet:=\mr{cosk}_1(Y\times_XY\rightrightarrows Y)\rightarrow X$
 satisfies the descent.
\end{lem}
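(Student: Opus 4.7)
The plan hinges on \emph{derived proper cohomological descent}: for any $\ms{F}\in D(X)$, the augmented Čech complex
\[
\ms{F}\rightarrow f_{0*}f_0^*\ms{F}\rightarrow f_{1*}f_1^*\ms{F}\rightarrow f_{2*}f_2^*\ms{F}
\]
is exact in $D(X)$, where $f_i\colon Y_i\rightarrow X$ is the augmentation at level $i$. All the $f_i$ are proper, since iterated fibre products of proper morphisms over $X$ remain proper. Granting this derived descent, the statement about the constructible heart follows in two complementary steps.

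For fully faithfulness of $f^*\colon\mr{Con}(X)\rightarrow\mr{Con}(Y_\bullet)$, given $\ms{F},\ms{G}\in\mr{Con}(X)$ I would apply $R\mr{Hom}(\ms{F},-)$ to the resolution above and take $H^0$, obtaining
\[
\mr{Hom}(\ms{F},\ms{G})\cong\mr{Ker}\bigl(\mr{Hom}(f_0^*\ms{F},f_0^*\ms{G})\rightrightarrows\mr{Hom}(f_1^*\ms{F},f_1^*\ms{G})\bigr),
\]
which is exactly the formula for morphisms in the category of descent data. For essential surjectivity, given a descent datum $(\ms{F}_0,\phi)$ on $Y_\bullet$, the cocycle condition together with the identity $Y_\bullet=\mr{cosk}_1$ of its $1$-skeleton assembles $(\ms{F}_0,\phi)$ into a simplicial object $\ms{F}_\bullet\in D(Y_\bullet)$. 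I set $\ms{F}_X:=\mr{Tot}(f_{\bullet *}\ms{F}_\bullet)\in D(X)$; derived descent applied to $\ms{F}_X$ identifies $f_0^*\ms{F}_X\cong\ms{F}_0$ compatibly with $\phi$. Since $f_0^*$ is t-exact for the constructible t-structure by \cite[1.3.4]{A} and conservative on the heart (because $f_0$ is surjective, so its pullback kills no nonzero constructible object), the fact $\ms{F}_0\in\mr{Con}(Y_0)$ forces $\ms{F}_X$ to lie in $\mr{Con}(X)$.

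The main obstacle will be establishing the derived descent itself. I would prove it by the standard cohomological descent strategy: the proper base change of \cite[2.3.22]{A} reduces the exactness of the augmented Čech complex to a fibrewise statement over a geometric point $\overline{x}\rightarrow X$, where it becomes the purely combinatorial assertion that the nerve of the non-empty scheme $Y_{\overline{x}}$ (non-empty by surjectivity of $f$) provides a contractible resolution of $\overline{x}$. A choice of $k(\overline{x})$-rational point of $Y_{\overline{x}}$ furnishes an explicit contracting homotopy, and the case of arbitrary coefficient $\ms{F}$ then follows via the projection formula.
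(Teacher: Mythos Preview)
Your strategy is sound in outline and is the standard cohomological--descent argument, but it is genuinely different from the paper's. The paper never leaves the abelian level: it writes down an explicit candidate quasi-inverse
\[
\beta(\ms{F},\phi)\;=\;\mr{Ker}\bigl(\cH^0 f_*\ms{F}\rightrightarrows\cH^0(f\circ d^1_0)_*\,d^{1*}_0\ms{F}\bigr)\in\mr{Con}(X),
\]
obtains unit and counit by adjunction, and checks these are isomorphisms by proper base change together with the pointwise criterion \cite[1.3.7~(i)]{A}, reducing to $X$ a point; a finite extension of that point then yields a section of $f$, after which everything is split. No derived totalizations enter. Your route instead establishes the stronger derived statement first and extracts the heart-level one from it; both arguments bottom out in the same two facts (proper base change, section over a closed point), and what you gain is a more conceptual framework while the paper gains freedom from any homotopy-limit machinery in a bare triangulated category.

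Two places in your sketch would need tightening before it is a proof. First, ``exact in $D(X)$'' for the three-term display $\ms{F}\to f_{0*}f_0^*\ms{F}\to f_{1*}f_1^*\ms{F}\to f_{2*}f_2^*\ms{F}$ is ambiguous; you need either the full cosimplicial resolution or a precise fiber-sequence formulation, and the paper works only with $2$-truncated simplicial schemes. Second, the sentence ``derived descent applied to $\ms{F}_X$ identifies $f_0^*\ms{F}_X\cong\ms{F}_0$'' is not literally derived descent for the object $\ms{F}_X$: what is actually needed is to pull $\mr{Tot}(f_{\bullet*}\ms{F}_\bullet)$ back along $f_0$, invoke proper base change termwise, and then use that $Y_\bullet\times_X Y_0\to Y_0$ is split by the diagonal. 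Along the way you must also justify why $f_0^*$ commutes with your $\mr{Tot}$, which is not formal since $f_0^*$ is a left adjoint while $\mr{Tot}$ is a limit; in practice one uses boundedness to reduce to a finite totalization.
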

\begin{proof}
 We have the natural functor
 $\alpha\colon\mr{Con}(X)\rightarrow\mr{Con}(Y_\bullet)$, and we need to
 show that this is an equivalence.
 Let us construct the quasi-inverse. Let
 $(\ms{F},d^{1*}_0\ms{F}\cong d^{1*}_1\ms{F})$ where
 $\ms{F}\in\mr{Con}(Y)$ be a descent data.
 This is sent to
 \begin{equation*}
  \mr{Ker}\bigl(f^0_*\ms{F}\rightrightarrows(f\circ
   d^1_0)^0_*d^{1*}_0\ms{F}\bigr),
 \end{equation*}
 where $g_*^0$ denotes $\cH^0g_*$ for a morphism $g$.
 This functor is denoted by $\beta$.
 By adjunction, we have functors $\mr{id}\rightarrow\beta\circ\alpha$
 and $\alpha\circ\beta\rightarrow\mr{id}$, and it remains to check that
 these functors are equivalent. Since $f$ is assumed proper, by proper
 base change and \cite[1.3.7 (i)]{A}, we may assume that $X$ is a point.
 Further, by replacing $X$ by its finite extension, we may assume that
 $f$ has a section $s\colon X\rightarrow Y$. In this case, the argument
 is standard.
\end{proof}

\begin{cor}
 \label{desseco}
 Any proper hypercovering satisfies the descent.
\end{cor}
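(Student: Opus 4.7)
The plan is to compare the given proper hypercovering $a\colon X_\bullet\to X$ to the truncated \v{C}ech nerve $Y_\bullet:=\mr{cosk}_0(X_0/X)$ of the proper surjection $X_0\to X$, so that $Y_0=X_0$, $Y_1=X_0\times_XX_0$ and $Y_2=X_0\times_XX_0\times_XX_0$. By Lemma~\ref{descone} applied to $X_0\to X$, the \v{C}ech nerve already satisfies descent: $\mr{Con}(X)\xrightarrow{\sim}\mr{Con}(Y_\bullet)$. There is a canonical morphism of augmented simplicial schemes $X_\bullet\to Y_\bullet$ which is the identity on $X_0$; on $X_1$ it is the proper surjective map $g\colon X_1\to Y_1$ furnished by the second hypercovering axiom; and on $X_2$ it factors as $X_2\to\mr{cosk}_1\mr{sk}_1(X_\bullet)_2\to Y_2$, where the first arrow is proper surjective by the third hypercovering axiom and the second is proper surjective as a base change of $g$. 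It will therefore suffice to show that the induced pullback functor $\mr{Con}(Y_\bullet)\to\mr{Con}(X_\bullet)$ is an equivalence.

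For fully faithfulness, I note that a morphism in either descent category is a morphism $\alpha\colon\ms{F}\to\ms{G}$ in $\mr{Con}(X_0)$ intertwining the chosen compatibility isomorphism at the appropriate level-$1$ scheme. Applying Lemma~\ref{descone} to $g\colon X_1\to Y_1$ identifies $\mr{Con}(Y_1)$ with descent data along the \v{C}ech nerve of $g$; in particular $g^*\colon\mr{Con}(Y_1)\to\mr{Con}(X_1)$ is fully faithful, with essential image characterised by descent along $X_1\times_{Y_1}X_1\rightrightarrows X_1$. Consequently the compatibility condition for $\alpha$ on $Y_1$ is equivalent to the compatibility condition on $X_1$, yielding fully faithfulness.

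For essential surjectivity, I would start with a descent datum $(\ms{F},\psi)$ in $\mr{Con}(X_\bullet)$, where $\psi\colon d^{1*}_0\ms{F}\xrightarrow{\sim}d^{1*}_1\ms{F}$ in $\mr{Con}(X_1)$ satisfies the cocycle identity on $X_2$. Lemma~\ref{descone} applied to $g$ tells me that $\psi$ descends to some $\phi$ on $Y_1$ precisely when its two pullbacks along $X_1\times_{Y_1}X_1\rightrightarrows X_1$ coincide. I would verify this by exhibiting a proper surjective morphism from a scheme built out of $X_2$ (together with suitable degeneracies $s^0_0\colon X_0\to X_1$) onto $X_1\times_{Y_1}X_1$, factoring through $\mr{cosk}_1\mr{sk}_1(X_\bullet)_2$, and checking that the cocycle identity for $\psi$ on $X_2$ pulls back to the required equality, which is then enough by faithfulness of pullback along a proper surjective map. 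A completely parallel argument, using that $X_2\to Y_2$ is proper surjective, would show that the resulting $\phi$ satisfies the cocycle condition on $Y_2$, producing the desired object of $\mr{Con}(Y_\bullet)$.

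The main obstacle I anticipate is the combinatorial bookkeeping in the essential surjectivity step: pinning down a proper surjective morphism from a scheme built from $X_2$ onto $X_1\times_{Y_1}X_1$, and verifying that the tautological cocycle identity on $X_2$ translates into the $\mr{cosk}_0(X_1/Y_1)$-descent condition for $\psi$. Once that combinatorial comparison is in place, every remaining assertion reduces to one further application of Lemma~\ref{descone} to a proper surjection.
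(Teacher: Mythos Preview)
Your strategy is the same as the paper's. On your anticipated obstacle: the paper first replaces $X_\bullet$ by $\mr{cosk}_1(X_1\rightrightarrows X_0)$ outright (using that pullback along the proper surjection $X_2\to\mr{cosk}_1\mr{sk}_1(X_\bullet)_2$ is faithful, so the cocycle condition on either level-$2$ object is the same), and then---rather than a proper surjection \emph{onto} $X_1\times_{Y_1}X_1$---writes down a map in the \emph{opposite} direction $X_1\times_{Y_1}X_1\to X_2$ via the explicit triple $(\mr{pr}_1,\mr{pr}_2,s^0_0\circ d^1_1\circ\mr{pr}_1)$ and the universal property of $\mr{cosk}_1$, so that pulling back the cocycle and invoking $s^{0*}_0\psi=\mr{id}$ yields $\mr{pr}_1^*\psi=\mr{pr}_2^*\psi$ directly; in particular your proposed factoring has its second arrow reversed, since the natural map runs from $X_1\times_{Y_1}X_1$ \emph{into} $\mr{cosk}_1\mr{sk}_1(X_\bullet)_2$, not out of it.
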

\begin{proof}
 The argument is very standard (for example, see
 \cite[Tag 0D8D]{Stack}), but we write a proof for the
 convenience of the reader. Let $Y_\bullet\rightarrow X$ be a
 hypercovering of $X$. If the hypercovering is
 $\mr{cosk}_1(Y_0\times_XY_0\rightrightarrows Y_0)$,
 then we already know the result by the
 lemma. The lemma also tells us that for a proper surjective morphism
 $W\rightarrow Z$, the pull-back $\mr{Con}(Z)\rightarrow\mr{Con}(W)$ is
 faithful. Thus, giving a descent data on $Y_\bullet$ is equivalent to
 giving a descent data on $\mr{cosk}_1(Y_1\rightrightarrows Y_0)$.
 From now on, we assume that
 $Y_\bullet=\mr{cosk}_1(Y_1\rightrightarrows Y_0)$.

 Given a proper hypercovering $Y_{\bullet}$, a descent data for
 $Y_\bullet$ is $\ms{F}\in\mr{Con}(Y_0)$ and an isomorphism
 $\phi\colon d^{1*}_0\ms{F}\cong d^{1*}_1\ms{F}$ satisfying some
 conditions.
 In order to define a descent data for
 $\mr{cosk}_1(Y_0\times_XY_0\rightrightarrows Y_0)$,
 we only need to descent $\phi$ to $Y_0\times_XY_0$. Now, we have the
 following morphism
 \begin{equation*}
  \alpha:=(\mr{pr}_1,\mr{pr}_2,s^0_0\circ d_1^1\circ\mr{pr_1})\colon
  Y_1\times_{(Y_0\times_XY_0)}Y_1\rightarrow
   Y_1\times Y_1\times Y_1
 \end{equation*}
 This defines the following diagram of simplicial schemes:
 \begin{equation*}
  \xymatrix{
   Y_1\times_{(Y_0\times_XY_0)}Y_1
   \ar@<0.7ex>[r]\ar[r]\ar@<-0.7ex>[r]_-{d'^2_i}
   \ar@{.>}[d]&
   Y_1\ar@{=}[d]
   \ar@<0.5ex>[r]\ar@<-0.5ex>[r]&
   Y_0\ar@{=}[d]\\
  Y_2
   \ar@<0.7ex>[r]^-{d^2_i}\ar[r]\ar@<-0.7ex>[r]&
   Y_1
   \ar@<0.5ex>[r]^-{d^1_i}\ar@<-0.5ex>[r]&
   Y_0
   }
 \end{equation*}
 where $d'^2_i:=\mr{pr}_i\circ\alpha$. By the universal property of
 $\mr{cosk}$, we have the dotted vertical arrow so that they form a
 morphism of simplicial schemes.
 The cocycle condition for $\phi$ on $Y_2$ pulled back to
 $Y_1\times_{(Y_0\times_XY_0)}Y_1$ by the dotted arrow gives us the
 following commutative diagram:
 \begin{equation*}
  \xymatrix@C=50pt{
   \mr{pr}_1^*d^{1*}_0\ms{F}
   \ar@{-}[r]^-{\mr{pr}_1^*\phi}_-{\sim}\ar@{-}[d]_{\sim}&
   \mr{pr}_1^*d^{1*}_1\ms{F}\ar@{-}[d]^{\sim}\\
  \mr{pr}_2^*d^{1*}_0\ms{F}
   \ar@{-}[r]^-{\mr{pr}_2^*\phi}_-{\sim}&
   \mr{pr}_2^*d^{1*}_1\ms{F}.
   }
 \end{equation*}
 Thus, the isomorphism $\phi$ descends to $Y_0\times_XY_0$, and defines
 a descent data on $\mr{cosk}_1(Y_0\times_XY_0\rightrightarrows Y_0)$.
 Finally, use Lemma \ref{descone} to conclude.
\end{proof}

\begin{lem}
 \label{vanlem}
 Let $X$ be a scheme, and $\ms{F}\in D^{\leq0}(X)$. We have
 $\H^0(\ms{F})=0$ is and only if for any closed immersion $i\colon
 Z\hookrightarrow X$, there exists a dense subscheme $U\subset Z$ such
 that $\H^0(i^*\ms{F})|_U=0$.
\end{lem}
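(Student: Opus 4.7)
The plan is to prove both directions separately, with the forward direction being essentially formal and the reverse direction proceeding by contrapositive via the support of $\H^0(\ms{F})$.

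For the forward direction, I would note that if $\H^0(\ms{F})=0$ then $\ms{F}\in D^{\leq -1}(X)$. Since $i$ is a closed immersion, $i^*$ is right t-exact with respect to the holonomic t-structure (this is one of the standard six-functor properties recalled in the introductory remark citing \cite[1.1.3]{A}), so $i^*\ms{F}\in D^{\leq -1}(Z)$ and hence $\H^0(i^*\ms{F})=0$. We can then take $U=Z$.

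For the reverse direction, argue the contrapositive: assume $\mathscr{G}:=\H^0(\ms{F})\neq 0$ and exhibit a single $Z$ for which no dense $U$ works. First, the truncation triangle $\tau^{\leq -1}\ms{F}\to\ms{F}\to\mathscr{G}\to$ together with right t-exactness of $i^*$ gives $i^*\tau^{\leq -1}\ms{F}\in D^{\leq -1}(Z)$, and the associated long exact sequence identifies $\H^0(i^*\ms{F})\cong\H^0(i^*\mathscr{G})$ for every closed immersion $i\colon Z\hookrightarrow X$. Now take $Z=\mr{Supp}(\mathscr{G})$ with its reduced scheme structure. By Kashiwara's equivalence, the functor $i_*\colon\mr{Hol}(Z)\to\mr{Hol}(X)$ is t-exact, fully faithful, and has essential image the objects supported on $Z$, so we can write $\mathscr{G}\cong i_*\mathscr{H}$ with $\mathscr{H}\in\mr{Hol}(Z)$ having support equal to all of $Z$. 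Since $i^*i_*\cong\mr{id}$ for closed immersions, we find $\H^0(i^*\mathscr{G})\cong\mathscr{H}$, which has full support $Z$. Hence for every dense open $U\subset Z$ one has $\H^0(i^*\ms{F})|_U\cong\mathscr{H}|_U\neq 0$, contradicting the hypothesis.

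The only potentially delicate point is verifying that the pieces of six-functor formalism I invoke (right t-exactness of $i^*$ on the holonomic t-structure, the Kashiwara equivalence for $i_*$ on a closed subscheme in $\mr{Sch}(k)$, and the isomorphism $i^*i_*\cong\mr{id}$) are available in this arithmetic $\ms{D}$-module setting for possibly non-realizable schemes; however, these all follow from the realizable case recalled in \cite[1.1.3]{A} together with the $2$-colimit construction of $D(X)$ recalled in the preliminary paragraphs, since each property is preserved under affine \'{e}tale pullback. No further obstacle is anticipated.
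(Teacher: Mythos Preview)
Your proof is correct and follows essentially the same approach as the paper's: both directions use right t-exactness of $i^*$, and for the converse both take $Z$ to be the support of $\H^0(\ms{F})$, reduce via the truncation triangle to $\H^0 i^*\H^0(\ms{F})$, and then invoke Kashiwara's equivalence to see this is nonzero on every dense open of $Z$. The paper phrases the Kashiwara step as $i_{Z*}\H^0 i_Z^*\H^0(\ms{F})\cong\H^0(\ms{F})$ rather than writing $\ms{G}=i_*\ms{H}$ and using $i^*i_*\cong\mr{id}$, but this is the same content; the only cosmetic difference is that the paper cites \cite[Proposition 1.3.13]{AC} for the right t-exactness of $i^*$.
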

\begin{proof}
 Only if part follows since $i^*$ is right exact by \cite[Proposition
 1.3.13]{AC}.
 Assume $\H^0\ms{F}$ is supported on a reduced scheme $Z$.
 Consider the triangle
 \begin{equation*}
  i_Z^*\tau_{<0}\ms{F}\rightarrow
   i_Z^*\ms{F}\rightarrow
   i_Z^*\H^0(\ms{F})\xrightarrow{+}.
 \end{equation*}
 Since $i^*$ is right exact,
 $\H^ii_Z^*\tau_{<0}\ms{F}=0$ for $i\geq0$, which implies that
 $\H^0i_Z^*(\ms{F})\cong\H^0i_Z^*\H^0(\ms{F})$. Since we assumed that
 $Z$ is the support of $\H^0\ms{F}$, we have
 $i_{Z*}\H^0i_Z^*\H^0(\ms{F})\cong\H^0(\ms{F})$. Combining these, we
 have $i_{Z*}\H^0i_Z^*(\ms{F})\cong\H^0(\ms{F})$, and this vanishes
 generically on $Z$ by assumption. This can happen only when
 $Z=\emptyset$.
\end{proof}

\subsection{}
\label{isocandsm}
Let us compare smooth objects with isocrystals.
For a scheme $X$ of finite type over $k$, we denote by
$\mr{Isoc}^\dag(X)$ the subcategory of the category of overconvergent
isocrystals on $X$ consisting of isocrystals whose constituents can be
endowed with Frobenius structure
(see right after \cite[1.1.3 (11)]{A}).
Caution that $\mr{Isoc}^\dag(X)$ is slightly smaller than the category
of overconvergent isocrystals on $X$.

\begin{thm*}
 Let $X$ be a scheme of finite type over $k$.
 Then we have a canonical equivalence of categories
 $\mr{Isoc}^\dag(X)\xrightarrow{\sim}\mr{Sm}(X)$.
 This equivalence is compatible with pull-back.
\end{thm*}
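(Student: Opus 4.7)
The plan is to establish the equivalence first for smooth $X$ and then reduce the general case to that one by proper hypercover descent; compatibility with pull-back will be automatic from the construction.

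Assume first that $X$ is smooth. Then $\mr{Isoc}^{\dag}(X)$ is already identified with the ``classical'' smooth category of \cite[1.3.1]{A} by \cite[1.1.3\,(12)]{A}, so it suffices to show that this classical category coincides with $\mr{Sm}(X)$. For the forward inclusion, let $\ms{F}$ be classically smooth and $\phi\colon S\to X$ a morphism from a strict henselian trait. After pulling $\phi^{*}\ms{F}$ back along an alteration supplied by Kedlaya's semistable reduction theorem, exactly as in the proof of Proposition~\ref{dim0case}, the pull-back becomes log-extendable; Proposition~\ref{dim0case}\,(2) gives the equality $\mr{rk}\bigl(\Psi_{\mr{id}}(\phi^{*}\ms{F})\bigr)=\mr{rk}(\phi^{*}\ms{F}|_{\eta})=\mr{rk}\bigl(i^{*}\phi^{*}\ms{F}\bigr)$, which combined with the fundamental triangle $i^{*}[-1]\to\Psi_{\mr{id}}\to\Phi_{\mr{id}}\xrightarrow{+}$ of Corollary~\ref{fundcomm}.\ref{fundextr} forces $\Phi_{\mr{id}}(\phi^{*}\ms{F})=0$, whence $\ms{F}\in\mr{Sm}(X)$. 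For the converse, let $\ms{F}\in\mr{Sm}(X)$. Any object of $\mr{Con}(X)$ is generically smooth, so there is a dense open $U\subset X$ on which $\ms{F}$ is classically smooth, and one then argues codimension by codimension that $\ms{F}$ extends smoothly across the complement. At a codimension-one generic point $\xi$ of $X\setminus U$, the strict henselization $\phi\colon S\to X$ at $\xi$ is a trait, and the hypothesis $\Phi_{\mr{id}}(\phi^{*}\ms{F})=0$ combined with Proposition~\ref{dim0case}\,(2) and Kedlaya's theorem forces $\phi^{*}\ms{F}$ to be log-extendable of the correct rank across the closed point; an application of Lemma~\ref{smpullok} together with descent from an alteration then propagates the smoothness to a neighbourhood of $\xi$, and induction on codimension finishes the argument.

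For a general $X\in\mr{Sch}^{\mr{ft}}(k)$, I would invoke de~Jong's alteration theorem to produce a proper hypercovering $Y_{\bullet}\to X$ with each $Y_{i}$ smooth. On the one hand, $\mr{Sm}(-)$ satisfies proper-hypercover descent: Corollary~\ref{desseco} gives descent for $\mr{Con}(-)$, and Lemma~\ref{smpullok} ensures that the object descended from a smooth descent datum is itself smooth. On the other hand, proper cohomological descent for overconvergent isocrystals is known (Tsuzuki), and the Frobenius-descent condition defining $\mr{Isoc}^{\dag}$ is preserved under the same hypercovering because Frobenius commutes with all pull-backs. Chaining these with the smooth case yields the claimed equivalence
\[
\mr{Isoc}^{\dag}(X)\;\cong\;\mr{Isoc}^{\dag}(Y_{\bullet})\;\cong\;\mr{Sm}(Y_{\bullet})\;\cong\;\mr{Sm}(X),
\]
functorially in $X$.

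The principal obstacle is the converse inclusion in the smooth case: converting the \emph{pointwise} vanishing of $\Phi_{\mr{id}}$ along every strict henselian trait into the \emph{global} regularity that characterises overconvergent isocrystals. This is where the combined use of Kedlaya's semistable reduction, the rank equality of Proposition~\ref{dim0case}\,(2), and the descent lemma~\ref{smpullok} all come into play at once, and where the bulk of the technical work will lie. A secondary point is checking that Tsuzuki's descent respects the Frobenius-structure condition defining $\mr{Isoc}^{\dag}$, which is handled by the fact that the Frobenius extends functorially along pull-backs in the hypercovering.
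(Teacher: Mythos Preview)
Your global architecture—handle smooth $X$ first, then descend along a proper hypercovering with smooth terms—is the same as the paper's, and your treatment of the descent step is essentially identical (the paper cites \cite[Proposition~7.3]{S} rather than Tsuzuki for isocrystal descent, and uses Lemma~\ref{smpullok} exactly as you do to ensure the descended object lies in $\mr{Sm}(X)$). The forward inclusion in the smooth case is also correct, though invoking Kedlaya there is unnecessary: if $\ms{F}$ is already an isocrystal then so is $\phi^{*}\ms{F}$, and the rank equality together with the fundamental triangle yields $\Phi_{\mr{id}}(\phi^{*}\ms{F})=0$ directly.

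The genuine gap is your converse inclusion for smooth $X$. The ``induction on codimension'' does not go past codimension one: at a point of $X\setminus U$ of codimension $\geq 2$ the strict henselization is not a trait, so the defining condition of $\mr{Sm}(X)$ gives you nothing there, and you supply no purity statement to continue. The paper takes a different route. It inducts on $\dim X$, handles curves via \cite[Lemma~2.4.11]{A}, and in higher dimension uses Shiho's cut-by-curve criterion \cite{Scut} to extend $E_{U}$ to an isocrystal $E$ on all of $X$ at once—the test being that $c^{*}E_{U}$ extends for every map $c\colon C\to X$ from a smooth curve, which holds because $c^{*}\ms{L}\in\mr{Sm}(C)$ and the curve case is already known. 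There is then a second step you do not address: even with $E$ in hand one must prove $\ms{L}\cong\rho_{X}(E)$ globally, not merely that $E_{U}$ extends. The paper does this by showing $\H^{n}(i^{*}\ms{L})=0$ for $i\colon X\setminus U\hookrightarrow X$ (via Lemma~\ref{vanlem} and the induction hypothesis applied to the lower-dimensional closed strata), which produces a canonical map $\ms{L}\to j_{!*}(\ms{L}|_{U})\cong\rho_{X}(E)$ that is then checked to be an isomorphism. Finally, your appeal to Lemma~\ref{smpullok} in the codimension-one step is circular as written: that lemma descends membership in $\mr{Sm}$, which is already your hypothesis; what you actually need at that point is to descend the \emph{isocrystal} structure.
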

\begin{proof}
 First, let us construct the functor in the case where $X$ is smooth.
 In this situation, Caro \cite{Cafai} (cf.\ \cite[2.4.15]{A} for a
 summary) defines a fully faithful functor
 \begin{equation*}
  \rho_X\colon\mr{Isoc}^\dag(X)\rightarrow D(X).
 \end{equation*}
 Note that this functor is compatible with pull-back.
 All we need to show is that the essential image of this functor is
 $\mr{Sm}(X)$. First, let us check this claim when $X$ is a
 curve. This follows by \cite[Lemma 2.4.11]{A}.
 Now assume $X$ is smooth but not necessarily a curve.
 We show by using the induction on the dimension of $X$. We assume the
 equivalence is known for any smooth $X$ of dimension $<n$. Assume $X$
 is of dimension $n$. 
 Let $E\in\mr{Isoc}^\dag(X)$, and let us check that $\rho_X(E)$ is
 smooth. By definition and some limit argument, it suffices to check
 that for any morphism $c\colon C\rightarrow X$ from a smooth curve $C$,
 $c^*\rho_X(E)$ is smooth. However, since
 $c^*\rho_X(E)\cong\rho_X(c^*E)$ by the compatibility of pull-back and
 we have already checked the claim
 for curves, $\rho_X(c^*E)$ is smooth, thus $\rho_X(E)\in\mr{Sm}(X)$.
 Let $\ms{L}\in\mr{Sm}(X)$, and let us show that $\ms{L}$ comes from an
 isocrystal. There exists an open dense subscheme $j\colon U\subset X$
 such that $\ms{L}|_U\cong\rho_U(E_U)$. Let us show that $E_U$ extends
 to an isocrystal $E$ on $X$. In order to check this, it suffices to
 show that for any morphism $c\colon C\rightarrow X$ from a smooth curve
 $C$, $c^*E_U$ extends to an isocrystal on $C$ by Shiho's cut-by-curve
 theorem \cite{Scut}. By the compatibility of pull-back and the
 equivalence of $\rho_C$ we have already checked, $c^*E_U$ does extend
 to an isocrystal on $C$, and thus $E_U$ also extends to an isocrystal
 $E$ on $X$. To conclude the proof, we need to show that the
 isomorphism $\ms{L}|_U\cong\rho_U(E_U)$ extends uniquely to an
 isomorphism $\ms{L}\cong\rho_X(E)$.

 Let $i\colon Z\rightarrow X$ be the complement of $U$. Let us show that
 $\H^n\bigl(i^*\ms{L}\bigr)=0$. In order to show this, it suffices to
 check that for any closed immersion $i_W\colon W\hookrightarrow Z$,
 $\H^n\bigl(i^*_Wi^*\ms{L}\bigr)$ vanishes generically on $W$ by Lemma
 \ref{vanlem}. Since the associated reduced scheme of $W$ is generically
 smooth, $i^*_Wi^*\ms{L}$ is smooth on $W$, and by induction hypothesis,
 $i^*_Wi^*\ms{L}$ generically comes from an isocrystal.
 Since isocrystals concentrates on degree $\leq\dim(W)<n$, we get the
 claim. Now, since $\ms{L}$ is
 constructible, $\H^i\ms{L}=0$ for $i>n$.
 Considering the exact triangle
 $j_!\ms{L}\rightarrow\ms{L}\rightarrow i^*\ms{L}\xrightarrow{+}$, the
 homomorphism $\H^nj_!\ms{L}\rightarrow\H^n\ms{L}$ is surjective because
 we have checked that $\H^ni^*\ms{L}=0$.
 Thus, we have a canonical homomorphism $\ms{L}\rightarrow
 j_{!*}(\ms{L})$. This induces
 $\ms{L}\rightarrow j_{!*}\ms{L}\cong j_{!*}\rho_U(E_U)\cong
 \rho_X(E)$ whose restriction to $U$ is the given map.
 The compatibility of pull-back and induction hypothesis implies that
 this is in fact an isomorphism and is a unique homomorphism extending
 the given $\ms{L}|_U\cong\rho_U(E_U)$.

 In the general case, by using de Jong's alteration, we can take a
 proper hypercovering $Y_\bullet$ of $X$ such that $Y_i$ is smooth for
 any $i$. By Corollary \ref{desseco},
 proper descent of isocrystals \cite[Proposition 7.3]{S},
 and the compatibility of pull-back, we have a functor
 $\mr{Isoc}^\dag(X)\rightarrow\mr{Con}(X)$.
 It is easy to check that this functor does not depend on the choice of
 $Y_0$, $Y_1$ up to canonical isomorphism.
 The essential image
 coincides with $\mr{Sm}(X)$ since smooth objects are preserved by
 pull-back and Lemma \ref{smpullok}.
\end{proof}

\begin{cor}
 Let $X$ be a scheme, $j\colon U\hookrightarrow X$ an open immersion,
 and $\ms{L}\in\mr{Sm}(X)$. For any $\ms{G}\in
 D(X)$, we have a canonical isomorphism
 \begin{equation*}
  \ms{L}\otimes j_*(\ms{G})\xrightarrow{\sim}
   j_*\bigl(j^*\ms{L}\otimes\ms{G}\bigr).
 \end{equation*}
\end{cor}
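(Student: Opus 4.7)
First, I would construct the canonical morphism by adjunction. Since $j$ is an open immersion, the counit $j^*j_*\to\mr{id}$ is an isomorphism; applying the unit $\mr{id}\to j_*j^*$ to $\ms{L}\otimes j_*\ms{G}$ and using that $j^*$ is monoidal gives
\[
\ms{L}\otimes j_*\ms{G}\to j_*j^*(\ms{L}\otimes j_*\ms{G})\cong j_*(j^*\ms{L}\otimes j^*j_*\ms{G})\xrightarrow{\sim}j_*(j^*\ms{L}\otimes\ms{G}).
\]
Its restriction along $j$ is the identity, so the cone $C$ is supported on the closed complement $Z:=X\setminus U$, and the problem reduces to showing $C=0$.

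Next, I would reduce to the case where $X$ is smooth. By de Jong's alteration, choose a proper hypercovering $f_\bullet\colon Y_\bullet\to X$ with each $Y_i$ smooth. Base change along the Cartesian square $U\times_X Y_i\to Y_i$ (automatic for the open immersion $j$) together with the monoidal nature of $f_i^*$ shows that the formation of our morphism is compatible with pull-back along $f_\bullet$, so $f_i^*C$ is the corresponding cone attached to $(f_i^*\ms{L},f_i^*\ms{G})$ on $Y_i$. Applying Corollary \ref{desseco} to each constructible cohomology $\cH^i C$ (supported on $Z$), the vanishing of $C$ on $X$ reduces to that of its pull-back to $Y_0$, and since $f_0^*\ms{L}$ remains smooth, the problem is reduced to the smooth case.

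Finally, on smooth $X$ the preceding Theorem identifies $\ms{L}$ with $\rho_X(E)$ for an overconvergent isocrystal $E$, so via the realization of $D(X)$ recalled in \cite[1.1.3]{A}, $\ms{L}$ is represented, up to the canonical shift by the dimension, by a coherent arithmetic $\ms{D}^\dag$-module which is locally free as an $\mc{O}$-module. For such $\ms{L}$ the functor $\ms{L}\otimes(-)$ is exact and agrees with the underived $\mc{O}$-tensor, so working Zariski-locally on $X$ where $\ms{L}$ trivializes as an $\mc{O}$-module, the asserted isomorphism collapses to the tautology $j_*\ms{G}\cong j_*\ms{G}$. The main obstacle is the descent step in the second paragraph: one must descend the vanishing of a morphism in $D(X)$ along a proper hypercovering and track the compatibility of $f_\bullet^*$ with both $j_*$ and $\otimes$; this is handled by passing to the constructible t-structure and invoking Corollary \ref{desseco} term by term, together with the base change for the étale morphism $j$.
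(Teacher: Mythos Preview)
Your construction of the canonical morphism by adjunction and the reduction to showing its cone $C$ vanishes are correct and agree with the paper. The genuine gap is in your second paragraph: for a proper, generically non-smooth morphism $f_i\colon Y_i\to X$ and an open immersion $j$, the base change map $f_i^*j_*\to j'_*f_i'^*$ is \emph{not} an isomorphism in general (take $X=\mb{A}^1$, $U=\mb{G}_{\mr{m}}$, $Y_0=\{0\}$: then $U\times_X Y_0=\emptyset$ but $f_0^*j_*L_U\neq 0$). Base change along an open immersion is automatic for $j_!$, not for $j_*$. Consequently $f_i^*C$ cannot be identified with ``the corresponding cone on $Y_i$'', and the reduction to smooth $X$ by proper descent does not go through.

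The paper avoids this by reversing the direction of transport. It argues by induction on $\dim\mr{Supp}(\ms{G})$; given an alteration $g\colon X'\to X$ with $X'$ smooth, it chooses an open dense $V\subset U$ over which $g$ is finite \'etale, uses the induction hypothesis to assume $\ms{G}=j_{UV*}\ms{G}_V$, and then embeds $\ms{G}_V$ as a direct summand of $g_{V*}g_V^*\ms{G}_V$. The crucial manipulation is the projection formula together with the commutation of $i^!$ with the \emph{pushforward} $g_*$ (proper base change on that side), which reduces the vanishing of $i^!(\ms{L}\otimes j_*\ms{G})$ to the analogous statement on the smooth $X'$. In short, the paper pushes forward from the alteration rather than pulling back to it. For the endgame on smooth $X$ the paper invokes \cite[Proposition~5.8]{Aex}; your local $\mc{O}$-freeness argument is morally in the right direction, but in the arithmetic $\ms{D}^\dag$-module setting it needs more justification than ``collapses to the tautology'', which is presumably why the paper outsources it.
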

\begin{proof}
 By limit argument, we may assume $X$ is of finite type over $k$.
 The homomorphism is defined by adjunctions. Since it is an isomorphism
 on $U$, it suffices to check that $i^!\bigl(\ms{L}\otimes
 j_*(\ms{G})\bigr)=0$ where $i\colon X\setminus U\rightarrow X$.
 Let us show the claim using the induction on the
 dimension of the support of $\ms{G}$. When the dimension is $0$, there
 is nothing to show.
 Take an alteration $g\colon X'\rightarrow X$ such that $X'$ is smooth,
 and let $j_{UV}\colon V\subset U$ be an open dense subscheme such
 that $g_V\colon g^{-1}(V)\rightarrow V$ is finite \'{e}tale. By
 induction hypothesis, we may assume that $\ms{G}=j_{UV*}\ms{G}_V$ for
 some $\ms{G}_V$. Since $g_V$ is finite \'{e}tale, $\ms{G}_V$ is a
 direct factor of $g_{V*}g_V^*\ms{G}_V$. Consider the following diagram
 \begin{equation*}
  \xymatrix{
   V'\ar[d]_{g_V}\ar@{}[rd]|\square\ar[r]^-{j'_V}&
   X'\ar[d]^{g}\ar@{}[rd]|\square&
   Z'\ar[d]^{g_Z}\ar[l]_-{i'}\\
  V\ar[r]^-{j_V}&X&X\setminus U.\ar[l]_-{i}
   }
 \end{equation*}
 Using projection formula and the commutation of $g_*$ and $i^!$, we
 have
 \begin{equation*}
  i^!\bigl(\ms{L}\otimes j_{V*}g_{V*}g_V^*(\ms{G}_V)\bigr)
   \cong
   i^!g_*\bigl(g^*\ms{L}\otimes j'_{V*}g_V^*(\ms{G}_V)\bigr)
   \cong
   g_{Z*}i'^!\bigl(g^*\ms{L}\otimes j'_{V*}g_V^*(\ms{G}_V)\bigr)
 \end{equation*}
 Thus, it suffices to check that $i'^!\bigl(g^*\ms{L}\otimes
 j'_{V*}g_V^*(\ms{G}_V)\bigr)=0$, and may assume that $X$ is
 smooth. Then we use Theorem \ref{isocandsm} and
 \cite[Proposition 5.8]{Aex} to conclude.
\end{proof}

\begin{cor}
 \label{commpsism}
 Let $(S,s,\eta)$ be a henselian trait.
 Let $\pi\colon X\rightarrow S$ be a morphism of finite type and
 $\ms{L}\in\mr{Sm}(X)$. Then for any $\ms{G}\in\mr{Hol}(X)$, we have a
 canonical isomorphism
 \begin{equation*}
  \Psi_\pi\bigl(\ms{L}\otimes\ms{G}\bigr)\cong
   \ms{L}|_{X_s}\otimes\Psi_\pi(\ms{G}).
 \end{equation*}
\end{cor}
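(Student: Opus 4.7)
The plan is to reduce to the projection-formula machinery of the previous corollary, first at the level of unipotent nearby cycles and then for the full functor. I would first prove the unipotent analogue: for a finite-type morphism $f\colon X\to\mb{A}^1_k$ and $\ms{L}\in\mr{Sm}(X)$,
\begin{equation*}
 \Psi^{\mr{un}}_f(\ms{L}\otimes\ms{G})\cong\ms{L}|_{X_0}\otimes\Psi^{\mr{un}}_f(\ms{G}).
\end{equation*}
Starting from the defining formula (ignoring the formal Tate twist) $\Psi^{\mr{un}}_f(\ms{F})\cong\indlim_n\mr{Ker}\bigl(j_!(\ms{F}\otimes\slog^n_f)\to j_*(\ms{F}\otimes\slog^n_f)\bigr)$ with $j\colon X\setminus X_0\hookrightarrow X$, I would substitute $\ms{F}=\ms{L}\otimes\ms{G}$ and pull the smooth factor $\ms{L}$ outside of $j_!$ by the standard projection formula and outside of $j_*$ by the previous corollary (which applies precisely because $\ms{L}\in\mr{Sm}(X)$). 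It remains to see that $\ms{L}\otimes-$ commutes with kernels and with $\indlim$: the latter is formal, and the former follows from Theorem \ref{isocandsm}, which identifies $\ms{L}$ with an overconvergent isocrystal, hence with an $\mc{O}$-coherent flat $\ms{D}$-module, so $\ms{L}\otimes-$ is exact on $\mr{Hol}$. Since the conclusion is supported on $X_0$, one replaces $\ms{L}$ by $\ms{L}|_{X_0}$ on the right.

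The unipotent case for $\pi\colon X\to S$ with $s\in S$ a codimension-one regular point then follows by choosing any $h\colon S\to\mb{A}^1$ with $h(s)=0$ and using that $\Psi^{\mr{un}}_\pi$ is $\Psi^{\mr{un}}_{h\pi}$ restricted to $X_s$; for $S$ a henselian trait, it further follows via the reduction $\Psi^{\mr{un}}_\pi=\rho_X^*\Psi^{\mr{un}}_{\widetilde\pi}$ from a finite-type situation, since $\rho_X^*$ is compatible with $\otimes$ and preserves smoothness. For the full $\Psi_\pi$, I would unwind
\begin{equation*}
 \Psi_\pi=\indlim_{S'\in\mr{Hen}(S)}h_{s,*}\circ\Psi^{\mr{un}}_{\pi'}\circ h^*,
\end{equation*}
use that $h^*\ms{L}\in\mr{Sm}(X\times_SS')$ (smoothness being tested against strict henselian traits and thus preserved by any pullback), apply the unipotent identity to $\pi'$, observe that $h_s$ is an isomorphism so that pushforward along $h_{s,*}$ trivially commutes with tensoring by $\ms{L}|_{X_s}$, and finally use that $\indlim$ commutes with $\otimes$ to assemble the stated isomorphism.

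The one non-formal input is the exactness of $\ms{L}\otimes-$ on $\mr{Hol}$, which rests on Theorem \ref{isocandsm} realising smooth objects as $\mc{O}$-coherent flat $\ms{D}$-modules; once this is in hand, everything reduces to a formal manipulation of the six-functor projection formulas and of the defining inductive limit.
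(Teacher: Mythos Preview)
Your proposal is correct and is precisely the argument the paper has in mind: the corollary is stated without proof because it follows from the preceding projection-formula corollary $\ms{L}\otimes j_*(-)\cong j_*(j^*\ms{L}\otimes-)$ together with the explicit Beilinson construction of $\Psi^{\mr{un}}$ and the passage to the full $\Psi$ via the colimit over $\mr{Hen}(S)$. Your write-up spells out exactly these steps, and the only nontrivial ingredient you single out---the t-exactness of $\ms{L}\otimes-$, needed to pull $\ms{L}$ through the kernel---is indeed available via Theorem \ref{isocandsm}.
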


\begin{thm}
 Let $f\colon X\rightarrow Y$ be a morphism of schemes.
 \begin{enumerate}
  \item\label{prespullsm}
       The functor $f^*$ preserves smooth objects, and induces a
       functor $f^*\colon\mr{Sm}(Y)\rightarrow\mr{Sm}(X)$.
  \item\label{prespullpr}
       Assume that $f$ is proper and smooth. Then $f_*$ preserves smooth
       objects.
 \end{enumerate}
\end{thm}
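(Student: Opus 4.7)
Claim (\ref{prespullsm}) is immediate from the definition: $f^*$ is $t$-exact for the constructible $t$-structure, so $f^*\ms{L}\in\mr{Con}(X)$; and for any morphism $\phi\colon S\to X$ from a strict henselian trait, $\phi^*f^*\ms{L}\cong(f\circ\phi)^*\ms{L}$, so $\Phi_{\mr{id}}(\phi^*f^*\ms{L})=0$ by the smoothness of $\ms{L}$. For claim (\ref{prespullpr}), I verify the vanishing of $\Phi_{\mr{id}}$ on the pull-back of $f_*\ms{L}$ to each strict henselian trait. Given $\phi\colon T\to Y$, form the cartesian square with $f_T\colon X_T\to T$ and $\phi_X\colon X_T\to X$. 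Proper base change gives $\phi^*f_*\ms{L}\cong f_{T,*}\phi_X^*\ms{L}$, and combining Corollary \ref{fundcomm}(\ref{fundcommpr}) with the triangle in (\ref{fundextr}) yields
\begin{equation*}
 \Phi_{\mr{id}_T}(\phi^*f_*\ms{L})\cong f_{t,*}\,\Phi_{f_T}(\phi_X^*\ms{L}).
\end{equation*}
By claim (\ref{prespullsm}), $\phi_X^*\ms{L}\in\mr{Sm}(X_T)$, and $f_T$ remains proper and smooth, so the theorem reduces to the following local claim: for any proper smooth $g\colon Z\to T$ over a strict henselian trait $T$ and any $\ms{M}\in\mr{Sm}(Z)$, one has $\Phi_g(\ms{M})=0$.

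For the local claim, I first observe that the unit $L_T$ is smooth on $T$: writing $T$ as a filtered limit of smooth $k$-schemes of finite type, Theorem \ref{isocandsm} says that the trivial isocrystal $L_{T_i}$ is smooth on each finite-type model, and claim (\ref{prespullsm}) then gives smoothness of the pull-back $L_T$. Hence $\Phi_{\mr{id}_T}(L_T)=0$ by definition, so the triangle (\ref{fundextr}) supplies a canonical isomorphism $i_T^*[-1]L_T\cong\Psi_{\mr{id}_T}(L_T)$. Applying Corollary \ref{commpsism} with $\ms{G}=L_Z\cong g^*L_T$ together with the smooth base change of Corollary \ref{fundcomm}(\ref{fundcommsm}) gives
\begin{equation*}
 \Psi_g(\ms{M})\cong\ms{M}|_{Z_t}\otimes\Psi_g(L_Z)\cong\ms{M}|_{Z_t}\otimes g^*\Psi_{\mr{id}_T}(L_T)\cong i_Z^*[-1]\ms{M},
\end{equation*}
using the monoidality of $i_Z^*$. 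Provided this isomorphism agrees with the canonical arrow $i_Z^*[-1]\ms{M}\to\Psi_g(\ms{M})$ in the fundamental triangle (\ref{fundextr}), we conclude $\Phi_g(\ms{M})=0$, completing the proof.

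The principal obstacle is precisely this last compatibility: one must show that the chain of canonical isomorphisms just constructed realises the natural arrow in the fundamental triangle and thereby forces the cone to vanish, rather than merely producing an abstract isomorphism of objects. This is a bookkeeping question about the unit/counit adjunctions underlying Corollaries \ref{commpsism} and \ref{fundcomm}(\ref{fundcommsm}) and their compatibility with the construction of $\Psi$, $\Phi$. As a fallback, one may instead invoke Theorem \ref{isocandsm} together with the classical preservation of overconvergent isocrystals under proper smooth rigid-cohomological pushforward, and check compatibility of the two pushforwards.
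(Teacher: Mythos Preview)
Your reduction to the local statement over a strict henselian trait is exactly the paper's, and your identification of $\Phi_{\mr{id}_T}(L_T)=0$ is correct (the paper obtains it from the rank formula of Proposition~\ref{dim0case} rather than from Theorem~\ref{isocandsm}, but either works). The difference is in how the local claim is finished, and this is where your compatibility worry comes from.

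You pass through $\Psi$: you use Corollary~\ref{commpsism} and Corollary~\ref{fundcomm}(\ref{fundcommsm}) to produce an abstract isomorphism $\Psi_g(\ms{M})\cong i_Z^*[-1]\ms{M}$, and then must argue that this is the canonical arrow in the triangle in order to kill the cone $\Phi_g(\ms{M})$. That compatibility is genuine bookkeeping and, as you say, is the only remaining obstacle in your argument. The paper sidesteps it entirely by working with $\Phi$ rather than $\Psi$: the same construction-level arguments that prove Corollary~\ref{commpsism} and Corollary~\ref{fundcomm}(\ref{fundcommsm}) for $\Psi$ prove them for $\Phi$, because $\Phi^{\mr{un}}$ is built from $j_!$, $j_*$, $\Xi$ and the identity, all of which commute with tensoring by a smooth object (by the corollary preceding~\ref{commpsism}) and with smooth pullback. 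The paper therefore writes directly
\[
\Phi_g(\ms{M})\;\cong\;\ms{M}|_{Z_t}\otimes\Phi_g(L_Z)\;\cong\;\ms{M}|_{Z_t}\otimes g^*\Phi_{\mr{id}_T}(L_T)\;=\;0,
\]
and no triangle-compatibility is needed. So your approach is not wrong, but the detour through $\Psi$ manufactures a difficulty that disappears once you observe that the two corollaries you are using hold verbatim for $\Phi$. Your fallback via Theorem~\ref{isocandsm} and Berthelot's conjecture would be circular in spirit, since (as the Remark following the theorem notes) part~(\ref{prespullpr}) is meant to be the $\ms{D}$-module incarnation of exactly that statement.
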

\begin{proof}
 The preservation under pull-back follows directly by definition, and we
 write it just for the future reference. Let us check the second claim.
 Take a strict henselian trait $S$ and a morphism $\phi\colon
 S\rightarrow Y$. Using \ref{prespullsm} and the commutation of $f_*$
 and $\phi^*$ because $f$ is assumed proper, it suffices to check the
 claim when $Y$ is strict henselian trait.
 Let $\ms{L}$ be a smooth object on $X$. We have
 \begin{equation*}
  \Phi_{\mr{id}} f_*(\ms{L})\cong
   f_*\Phi_f(\ms{L})\cong
   f_*\bigl(\Phi_f(L_X)\otimes\ms{L}|_{X_s}\bigr),
 \end{equation*}
 where the first isomorphism follows by Corollary
 \ref{fundcomm}.\ref{fundcommpr}, and the second by
 Corollary \ref{commpsism}. Finally, since $f$ is assumed smooth, we
 have $\Phi_f(L_X)\cong f^*\Phi_{\mr{id}}(L_Y)$ by
 Corollary \ref{fundcomm}.\ref{fundcommsm}. Since $Y$
 is assumed strict henselian trait,
 $\mr{rk}\bigl(\Psi_{\mr{id}}(L_Y)\bigr)=1$ by Proposition
 \ref{dim0case}. Thus, the exact triangle \ref{fundcomm}.\ref{fundextr}
 tells us that $\Phi_{\mr{id}}(L_Y)=0$, and the theorem follows.
\end{proof}

\begin{rem*}
 One can think part \ref{prespullpr} of the theorem above as a
 $\ms{D}$-module theoretic version of Berthelot's conjecture
 \cite[(4.3)]{B}.
 This variant has already been considered by Caro in
 \cite[Th\'{e}or\`{e}me 4.4.2]{Cprev} when $X$, $Y$ are realizable
 schemes. In that case, he proved without the
 existence of Frobenius structure, whereas we assume the existence
 implicitly in the construction of the category $\mr{Hol}(X)$.
 However, our theorem is stronger in the sense that the schemes need not
 be realizable. In order to deduce the original Berthelot's conjecture
 from our result, one might need to compare our push-forward and
 relative rigid cohomology (cf.\ \cite[2.4.16]{A}).
 This will be addressed in future works.
\end{rem*}

\begin{prob}
 Assume $X$ is smooth. For any object $\ms{M}\in D(X)$, the
 characteristic variety $\mr{Car}(\ms{M})$ is defined as a closed
 subscheme of codimension $\dim(X)$ in $T^*X$ by Berthelot.
 We expect that this characteristic variety has the following
 characterization: for any $X\supset U\xrightarrow{f}\mb{A}^1$ such that
 $U$ is an open subscheme of $X$ and
 $df(U)\cap\mr{Car}(\ms{M})=\emptyset$ we have $\Phi_f(\ms{M})=0$, and
 $\mr{Car}(\ms{M})$ is the smallest closed subscheme of $T^*X$ having
 possessing such a property.
\end{prob}

\section{Toward a local theory}
\label{locsec}
\begin{dfn}
 \label{localcat}
 \begin{enumerate}
  \item\label{localcatbase}
       Let $S$ be a henselian local scheme ({\it i.e.}, the spectrum of
       a henselian local ring), and let $i\colon s\hookrightarrow S$ be
       the closed immersion from the closed point.
       The category $\mr{Loc}(S)$ is defined as follows:
       The object is the same as $\mr{Hol}(S)$. For
       $\ms{F}\in\mr{Hol}(S)$ the corresponding object in $\mr{Loc}(S)$
       is denoted by $\ms{F}^{\mr{loc}}$. Then
       \begin{equation*}
	\mr{Hom}_{\mr{Loc}(S)}(\ms{F}^{\mr{loc}},\ms{G}^{\mr{loc}}):=
	 H^0\bigl(s,i^*\shom_S(\ms{F},\ms{G})\bigr),
       \end{equation*}
       where $H^i(s,\ms{H}):=\mr{Hom}_{D(s)}(L_s,\ms{H}[i])$.
       The object $L_S^{\mr{loc}}$ is denoted by $L_S$ for simplicity.

  \item Let $S$ be a local henselian scheme, and $f\colon X\rightarrow
	S$ be a morphism of finite type. Then $\mr{Loc}(X/S)$ is defined
	as follows. The objects are the same as $\mr{Hol}(X)$. For
	$\ms{F},\ms{G}\in\mr{Hol}(X)$, we define
	\begin{equation*}
	 \mr{Hom}_{\mr{Loc}(X/S)}(\ms{F},\ms{G}):=
	  \mr{Hom}_{\mr{Loc}(S)}
	  \bigl(L_S,f_*\shom(\ms{F},\ms{G})\bigr).
	\end{equation*}
 \end{enumerate}
\end{dfn}

\begin{rem}
 \begin{enumerate}
  \item The category $\mr{Loc}(X/S)$ is certainly an additive category.
	However, we do not know if this is abelian or not.
       
  \item\label{highhomzero}
       We assume we are in the situation of
	\ref{localcat}.\ref{localcatbase}.
	We remark that
	\begin{equation*}
	 H^i\bigl(s,i^*\shom_S(\ms{F},\ms{G})\bigr)=0
	\end{equation*}
	for $i<0$. Indeed, let us check first that for any scheme $X$
	and $\ms{F},\ms{G}\in\mr{Hol}(X)$,
	$\cH^i\shom_X(\ms{F},\ms{G})=0$ for $i<0$.
	We may assume $X$ is smooth. For $\ms{F}\in
	D^{\leq0}(X)$, $\ms{G}\in D^{\geq0}(X)$, we may check as
	\cite[2.1.20]{BBD} that $\shom(\ms{F},\ms{G})\in
	D^{\geq\dim(X)}(X)$. Thus,
	$\cH^i\shom(\ms{F},\ms{G})=0$ for $i<0$. Since $i^*$ is
	c-t-exact and $L_s$ is constructible object, we get the claim.
	
  \item Ultimately, we expect a triangulated category
	$D_{\mr{loc}}(X/S)$ with the following properties:
	First, we have a functor $\rho\colon D(X)\rightarrow
	D_{\mr{loc}}(X/S)$. For $\ms{F},\ms{G}\in D(X)$, we should have
	\begin{equation*}
	 \mr{Hom}_{D_{\mr{loc}}(X/S)}
	  \bigl(\rho(\ms{F}),\rho(\ms{G})\bigr)
	  \cong
	  \mr{Hom}_{D(s)}
	  \bigl(L_s,i^*f_*\shom(\ms{F},\ms{G})\bigr).
	\end{equation*}
	Secondly, we have a t-structure on $D_{\mr{loc}}(X/S)$
	whose heart contains $\rho(\mr{Hol}(X))$. The computation of
	part \ref{highhomzero} above shows that the ``higher
	homotopies'' of $\rho(\mr{Hol}(X))$ vanish in the category
	$D_{\mr{loc}}(X/S)$.
	This gives us an evidence of the existence of such t-structure.
	The category $\mr{Loc}(X/S)$ should be a full subcategory of
	this heart.
	
	This category should be an analogue of the derived category of
	constructible sheaves for $\ell$-adic sheaves of a scheme of
	separated of finite type over a local henselian scheme as in
	\cite[Theorem 6.3]{eke}. The following Theorem \ref{loccarob}
	gives an evidence for this philosophy.
 \end{enumerate}
\end{rem}

\subsection{}
Let $S$ be a strict henselian trait, and $\pi\colon X\rightarrow S$ be a
morphism of finite type. The nearby cycle formalism extends to that on
$\mr{Loc}(X)$. For $\ms{F},\ms{G}\in D(X)$, we have the canonical
homomorphism
\begin{equation*}
 \Psi_\pi(\ms{F})\otimes\ms{G}|_{X_s}\rightarrow
  \Psi_\pi\bigl(\ms{F}\otimes\ms{G}\bigr).
\end{equation*}
On the other hand, the adjunction induces a map
\begin{equation*}
 L_{X_s}\boxtimes H^0(X_s,\ms{H})\rightarrow\ms{H}
\end{equation*}
for any $\ms{H}\in D(X_s)$. Combining these, we obtain a homomorphism
\begin{equation*}
 \Psi_\pi(\ms{F})\boxtimes
  \mr{Hom}_{\mr{Loc}(X)}(\ms{F}^{\mr{loc}},\ms{G}^{\mr{loc}})
  \rightarrow
  \Psi_\pi\bigl(\ms{F}\otimes\shom(\ms{F},\ms{G})\bigr)
  \rightarrow
  \Psi_\pi(\ms{G}).
\end{equation*}
Thus, we have a homomorphism
\begin{equation*}
 \mr{Hom}_{\mr{Loc}(X)}(\ms{F}^{\mr{loc}},\ms{G}^{\mr{loc}})
  \rightarrow
  \mr{Hom}_{\mr{Hol}(X_s)}\bigl(\Psi_\pi(\ms{F}),
  \Psi_\pi(\ms{G})\bigr).
\end{equation*}
If $\ms{F}=\ms{G}$, then the identity is sent to the identity, and the
map is compatible with the composition. Thus, $\Psi_\pi$ is defined also
on the level of the category $\mr{Loc}(X)$.

\subsection{}
Let us describe the category $\mr{Loc}(S)$ when $S$ is a henselian
trait in terms of the theory on a formal unit disk after Crew.
Let $\ms{S}:=k\dd{t}$, a formal disk. For this disk, he constructed the
category $\mr{Coh}^{\mr{an}}(\mc{D}^\dag)$ in \cite[5.2]{Cr}.
Crew defined a category of holonomic objects in
$\mr{Coh}^{\mr{an}}(\mc{D}^\dag)$ with Frobenius structure, and denote
it by $\mr{Hol}^{\mr{an}}(F\mc{D}^\dag)$. As usual, we consider the full
subcategory of $\mr{Coh}^{\mr{an}}(\mc{D}^\dag)$ consisting of objects
which are of finite length and whose constituent can be endowed with a 
Frobenius structure with which the constituent is in
$\mr{Hol}^{\mr{an}}(F\mc{D}^\dag)$. We denote this category by
$\mr{Hol}^{\mr{an}}(\mc{D}^\dag)$, or
$\mr{Hol}^{\mr{an}}(\mc{D}^\dag_{\ms{S},\mb{Q}})$ if we want to
emphasize the formal disk. Even though the definition seems to be a bit
involved, $\mr{Hol}^{\mr{an}}(\mc{D}^\dag)$ is very close to the
category which has already appeared in the theory of $p$-adic
differential equation. In fact, the category
$\mr{MLS}(\mc{R},\mathbf{F},\mathbf{pot})$ appearing in
\cite[D\'{e}finition 6.0-19]{CM} is contained full faithfully in
$\mr{Hol}^{\mr{an}}(\mc{D}^\dag)$, and it is easy to characterize this
subcategory: it consists of the objects
$M\in\mr{Hol}^{\mr{an}}(\mc{D}^\dag)$ such that $i^!M=0$.
The verification of this characterization is left to the reader.

\begin{thm*}
 \label{loccarob}
 Let $S$ be a henselian trait such that the closed point is of finite
 type over $k$. Let $\ms{S}$ be the formal completion of $S$ with
 respect to the closed point. Then we have the canonical equivalence of
 categories
 $\mr{Loc}(S)\cong\mr{Hol}^{\mr{an}}(\mc{D}^\dag_{\ms{S},\mb{Q}})$.
\end{thm*}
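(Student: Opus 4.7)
The plan is to construct a functor $F : \mr{Loc}(S) \to \mr{Hol}^{\mr{an}}(\mc{D}^\dag_{\ms{S},\mb{Q}})$ by formal completion along the closed point $s$, and then verify that $F$ is both fully faithful and essentially surjective. To construct $F$, write $S \cong \invlim_i U_i$ where each $U_i$ is an affine \'etale neighborhood of $s$ in a smooth curve $C_i/k$, so that $\mr{Hol}(S) = 2\mbox{-}\indlim_i \mr{Hol}(U_i)$ by the definition in \S1.3. Fix a smooth formal lift of $C_i$ and embed it in a proper smooth formal scheme $\mc{P}_i$ over $R$; an object $\ms{F}_i \in \mr{Hol}(U_i)$ is then represented by a holonomic $\ms{D}^\dag$-module on $\mc{P}_i$ supported in $U_i$. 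Restricting along the inclusion $\ms{S} \hookrightarrow \mc{P}_i$ of the formal disk yields an object of $\mr{Coh}^{\mr{an}}(\mc{D}^\dag_{\ms{S},\mb{Q}})$ which is holonomic in Crew's sense because the original object was overholonomic with Frobenius structure. Compatibility with transition maps $\mr{Hol}(U_j) \to \mr{Hol}(U_i)$ is routine, and this produces $F$ on the $2$-colimit.

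For full faithfulness, one must verify the comparison
\begin{equation*}
 H^0\bigl(s, i^*\shom_S(\ms{F}, \ms{G})\bigr) \xrightarrow{\sim} \mr{Hom}_{\mr{Hol}^{\mr{an}}}\bigl(F\ms{F}, F\ms{G}\bigr).
\end{equation*}
Representing $\ms{F}, \ms{G}$ on some $U_i$, the constructible complex $i^*\shom_{U_i}(\ms{F}_i, \ms{G}_i)$ depends only on a formal neighborhood of $s$, since $i^*$ kills the contribution away from $s$ and completion along $s$ is flat on the coherent $\ms{D}^\dag$-modules at hand. Its $H^0$ then matches the Hom group in Crew's category, which is likewise computed as global sections of the local Hom sheaf on $\ms{S}$. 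Passing to the $2$-colimit over étale neighborhoods gives full faithfulness on $\mr{Loc}(S)$.

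For essential surjectivity, given $M \in \mr{Hol}^{\mr{an}}(\mc{D}^\dag_{\ms{S},\mb{Q}})$, Kedlaya's $p$-adic local monodromy theorem \cite{K} implies that after pulling back by a finite ramified cover of formal disks $\ms{S}' \to \ms{S}$, the pullback of $M$ becomes log-extendable along the origin. A log-extendable object on the formal disk is the restriction of a log-isocrystal on a smooth algebraic curve containing $s$, hence corresponds to an object of $\mr{Hol}(U')$ for some \'etale neighborhood $U'$ of $s$. Algebraizing the cover to a finite morphism of henselian traits via Artin approximation, and then descending the algebraic object along this cover, produces a preimage of $M$ in $\mr{Hol}(S)$, finishing the proof.

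The main obstacle will be the descent step in essential surjectivity: while Kedlaya's theorem and the algebraization of finite ramified covers are standard, transporting Crew's formal-disk descent datum into a descent datum inside the category $\mr{Hol}(S)$ of \cite{A} requires compatibility of $F$ with pushforward by finite morphisms, together with faithfully flat descent for morphisms in $\mr{Loc}$. Both should follow from the analogous statements on henselian \'etale neighborhoods of smooth curves together with the flatness of formal completion, but checking them carefully --- especially keeping track of Frobenius structure throughout the descent --- is where the technical work lies.
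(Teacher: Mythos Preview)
Your overall strategy matches the paper's: construct an analytification functor $\mr{Hol}(S)\to\mr{Hol}^{\mr{an}}(\mc{D}^\dag)$, show it factors through $\mr{Loc}(S)$ and induces a fully faithful functor, then check essential surjectivity.

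However, your full faithfulness argument has a genuine gap. The comparison you need is
\[
 H^0\bigl(s,i^*\shom_S(\ms{F},\ms{G})\bigr)\;\cong\;
 H^0\bigl(\ms{S},\shom(\ms{F}^{\mr{an}},\ms{G}^{\mr{an}})\bigr),
\]
and flatness of formal completion only gives $\shom_S(\ms{F},\ms{G})^{\mr{an}}\cong\shom(\ms{F}^{\mr{an}},\ms{G}^{\mr{an}})$; it does not explain why applying $H^0(s,i^*-)$ on the algebraic side agrees with applying $H^0(\ms{S},-)$ on the analytic side. The paper reduces this to showing $H^i(\ms{S},\mc{M})\cong H^i(s,i^*\mc{M})$ for $\mc{M}\in\mr{Hol}^{\mr{an}}(\mc{D}^\dag)$ and treats it via the localization triangle: the case $\mc{M}=i_*\mc{N}$ is trivial, but the case $\mc{M}=j_*j^*\mc{M}$ --- which is where all the content lies, and which your phrase ``$i^*$ kills the contribution away from $s$'' does not touch --- requires a nontrivial input, namely \cite[Lemma~3.1.10]{AM}. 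Your argument supplies nothing in its place.

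For essential surjectivity, the paper bypasses the descent obstacle you identify by invoking the Crew--Matsuda extension theorem \cite[Theorem~8.2.1]{Cr}, which directly asserts that every object of $\mr{Hol}^{\mr{an}}(\mc{D}^\dag)$ extends to an algebraic holonomic module on a curve. Your route through the $p$-adic local monodromy theorem, algebraization of the cover, and descent is essentially a sketch of how one might reprove that extension theorem, and you are right that the descent step is the delicate part; but since the result is already available, there is no need to re-derive it here.
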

\begin{proof}
 First, we have the canonical functor
 $\mr{An}\colon\mr{Hol}(S)\rightarrow
 \mr{Hol}^{\mr{an}}(\mc{D}^\dag)$ sending
 the object $\ms{F}\in\mr{Hol}(S)$ to $\ms{F}^{\mr{an}}$.
 Let us show that this functor factors through
 $\mr{Hol}(S)\rightarrow\mr{Loc}(S)$. In order to check this, let us
 show that we have a canonical isomorphism
 $H^i(S,\mc{M})\xrightarrow{\sim}H^i(s,i^+\mc{M})$ for
 $\mc{M}\in\mr{Hol}^{\mr{an}}(\mc{D}^\dag)$ and $i\in\mb{Z}$.
 Since the homomorphism can be defined by adjointness, we need to check
 that it is an isomorphism. When $\mc{M}=i_+\mc{N}$, the claim is
 obvious, so we may assume that $\mc{M}=j_+j^+\mc{M}$.
 In this case, the claim follows by \cite[Lemma 3.1.10]{AM}.
 Now, this isomorphism induces
 \begin{align*}
  \mr{Hom}(\ms{F}^{\mr{loc}},\ms{G}^{\mr{loc}}):=
  H^0\bigl(s,i^*\shom(\ms{F},\ms{G})\bigr)
  \xleftarrow{\sim}
  H^0\bigl(S,\shom(\ms{F}^{\mr{an}},\ms{G}^{\mr{an}})\bigr)
  \cong
  \mr{Hom}(\ms{F}^{\mr{an}},\ms{G}^{\mr{an}}).
 \end{align*}
 It is easy to check that this isomorphism is compatible with
 composition, and we have the desired functor
 $\mr{Loc}(S)\rightarrow\mr{Hol}^{\mr{an}}(\mc{D}^\dag)$, which is
 moreover fully faithful. To check the equivalence, it remains to check
 that the functor is essentially surjective. This follows by
 Crew-Matsuda extension \cite[Theorem 8.2.1]{Cr}.
\end{proof}

\begin{rem}
 \begin{enumerate}
  \item It might be possible to reprove \cite{AM} without using
	microlocal technique by using the foundation of this paper.
  \item  It would be interesting to compare the theory developed here
	 and recent works of Lazda-Pal \cite{LP}, Caro-Vauclair
	 \cite{CV}, or Crew \cite{Cr2} on the theory of $p$-adic
	 cohomology theory for formal schemes or schemes over Laurent
	 series field.
 \end{enumerate}
\end{rem}

Tomoyuki Abe:\\
Kavli Institute for the Physics and Mathematics of the Universe
(WPI), University of Tokyo\\
5-1-5 Kashiwanoha, Kashiwa, Chiba, 277-8583, Japan\\
{\tt tomoyuki.abe@ipmu.jp}

\end{document}